\documentclass[a4paper, 10pt]{amsart}

\usepackage{a4wide}
\usepackage{amsmath}
\usepackage{amssymb}
\usepackage{dsfont}

\newcommand{\e}{\varepsilon}

\newcommand{\R}{\mathbb{R}}
\newcommand{\N}{\mathbb{N}}
\renewcommand{\P}{\mathcal{P}}
\newcommand{\C}{\mathcal{C}}
\newcommand{\J}{\mathcal{J}}
\newcommand{\F}{\mathcal{F}}
\renewcommand{\H}{\mathcal{H}}
\newcommand{\E}{\mathcal{E}}

\newcommand{\K}{\mathcal{K}}
\newcommand{\G}{\mathcal{G}}

\usepackage{colortbl}

\newcommand{\pa}{\partial}
\newcommand{\lt}{\left}
\newcommand{\rt}{\right}
\newcommand{\bq}{\begin{equation}}
\newcommand{\eq}{\end{equation}}
\newcommand{\lal}{\langle}
\newcommand{\ral}{\rangle}
\newcommand{\mw}{W}

\renewcommand{\rho}{\varrho}

\newtheorem{proposition}{Proposition}
\newtheorem{theorem}{Theorem}
\newtheorem{corollary}{Corollary}

\theoremstyle{definition}
\newtheorem{definition}{Definition}
\newtheorem{remark}{Remark}

\title[Convergence for damped Euler equations with interaction forces]{Convergence to Equilibrium in Wasserstein distance for damped Euler equations with interaction forces}

\author[Carrillo]{Jos\'{e} A. Carrillo}
\address[Jos\'{e} A. Carrillo]{\newline Department of Mathematics, Imperial College London, 
    \newline London SW7 2AZ, United Kingdom}
\email{carrillo@imperial.ac.uk}

\author[Choi]{Young-Pil Choi}
\address[Young-Pil Choi]{\newline Department of Mathematics and Institute of Applied Mathematics, Inha University, 
    \newline Incheon 402-751, Republic of Korea}
\email{ypchoi@inha.ac.kr}

\author[Tse]{Oliver Tse}
\address[Oliver Tse]{\newline Department of Mathematics and Computer Science, Eindhoven University of Technology, 
\newline P.O. Box 513, 5600 MB Eindhoven, The Netherlands}
\email{o.t.c.tse@tue.nl}

\begin{document}

\keywords{Convergence to equilibrium, Euler equations, overdamped limit, Wasserstein distance}
\subjclass[2010]{
49K20,	
76N99,   
35L40    
}

\begin{abstract}
We develop tools to construct Lyapunov functionals on the space of probability measures in order to investigate the convergence to global equilibrium of a damped Euler system under the influence of external and interaction potential forces with respect to the 2-Wasserstein distance. We also discuss the overdamped limit to a nonlocal equation used in the modelling of granular media with respect to the 2-Wasserstein distance, and provide rigorous proofs for particular examples in one spatial dimension.
\end{abstract}  

\maketitle


%
%
\section{Introduction}\label{sec:intro}

In this paper, we develop tools to analyse the large-time behavior of second-order dynamics that describe evolutions in the space of probability measures driven by a free energy $\F$. More precisely, we consider the evolution of probability measures $\mu$ with Lebesgue densities $\rho$ and their velocities $u$ described by {\em damped Euler systems} with damping parameter $\gamma>0$ of the form
\begin{align}\label{eq:euler}
\begin{aligned}
 \partial_t\rho_t + \nabla\cdot(\rho_t u_t) &= 0,\qquad (t,x) \in \R_+\times\R^d,\\
 \partial_t(\rho_t u_t) + \nabla\cdot(\rho_t u_t\otimes u_t) &= -\rho_t\nabla(\delta_{\mu}\F)(\mu_t) -\gamma \rho_t u_t,
\end{aligned}
\end{align}
subject to initial density and velocity conditions
\bq\label{ini_eq:euler}
(\rho_t, u_t)|_{t=0} = (\rho_0, u_0) \quad \mbox{for} \quad x \in \R^d,
\eq
where $\delta_\mu\F=\delta\F/\delta\mu$ is the variational derivative of a free energy $\F$ acting on probability measures $\mu$ that are absolutely continuous with respect to the Lebesgue measure on $\R^d$ with Radon--Nikodym derivative $d\mu/dx=\rho$, given by
\begin{align}\label{eq:entropy}
\F(\mu) := \int_{\R^d} U(\rho)\,dx + \int_{\R^d} V(x)\,d\mu(x) + \frac{1}{2}\iint_{\R^d\times\R^d} W(x-y)\,d\mu(x)\,d\mu(y).
\end{align}
Here, $U$ denotes an increasing function describing the internal energy of the density $d\mu/dx=\rho$, $V\colon\R^d\to\R$ and $\mw\colon\R^d\to\R$ are the confinement and the interaction potentials respectively. In this case, the variational derivative of $\F$ in \eqref{eq:entropy} is given by \cite{ambrosio2008gradient,villani2008optimal}
\[
 (\delta_{\mu}\F) (\mu) = U'(\rho) + V(x) + W\star\mu.
\]
In Section~\ref{sec:stationary} we will make sufficient assumptions on $U$, $V$ and $W$ in order to ensure the existence of a stationary measure $\mu_\infty$ for the damped Euler system \eqref{eq:euler} that may also be characterized as a minimizer of the free energy $\F$. In the sequel, we will identify the measure $\mu$ with its density $\rho$ as soon as the measure $\mu$ has a Lebesgue density.

While the well-posedness of \eqref{eq:euler} remains a challenging open question---even for restricted classes of initial data \eqref{ini_eq:euler}---and not dealt with in this paper, damped Euler systems without confining and interaction forces ($V=W\equiv 0$) have been investigated in multiple contexts. For instance, the global existence of BV and $L^\infty$ entropy weak solutions for the one-dimensional case were addressed in \cite{dafermos2009global,hsiao1998global} and \cite{ding1989convergence,huang2006asymptotic} respectively. The asymptotic behavior of solutions were also discussed in \cite{hsiao1992convergence,huang2005convergence,huang2006asymptotic,huang20111,nishihara2000lp}. For the multi-dimensional case, global existence and pointwise estimates of solutions based on the Green's function approach together with energy estimates were obtained in \cite{wang2001pointwise}, while the global existence of classical solutions and the large-time behavior of solutions were studied in \cite{bianchini2008dissipative,hanouzet2003global,sideris2003long} under the smallness assumptions on the initial data. We also refer to \cite{fang2009existence,liao2009lp} for the study of global well-posedness and asymptotic behavior of solutions based on the framework of Besov spaces. We refer the reader to \cite{chen2005euler} for a general survey of the Euler equations.

An initial attempt at proving equilibration results with explicit decay rates was conducted in \cite{klar2015entropy} for the case $U(s) = s\log s$, $V(x)=|x|^2/2$ and $W\equiv 0$. There, the authors used entropy dissipation methods to heuristically derive functional inequalities that provided the decay rates to equilibrium under relatively strong global regularity assumptions on $(\rho,u)$. The results in \cite{klar2015entropy} indicate a convergence behavior similar to spatially inhomogeneous entropy-dissipating kinetic equations where {\em hypocoercivity} of the operators involved played an important role in determining convergence to equilibrium \cite{desvillettes2005trend, DKMS13, dolbeault2015hypocoercivity,klar2014approximate,klar2017trend,villani2009hypocoercivity}. There, the exponential decay rate $\lambda=\lambda(\gamma)$ has the property that $\lambda\to 0$ as $\gamma\to 0$ and $\gamma\to+ \infty$, i.e., the best equilibration rate for \eqref{eq:euler} holds for some $\gamma\in(0,\infty)$.

A related equation is the well-known {\em aggregation-diffusion equation}
\begin{align}\label{eq:granular}
\partial_t\bar\rho_t = \nabla\cdot\bigl(\bar\rho_t\nabla(\delta_\mu\F)(\bar\rho_t)\bigr),\qquad (t,x) \in \R_+\times\R^d, 
\end{align}
with $\bar\rho_t$ a probability density on $\R^d$. The long-time asymptotics for \eqref{eq:granular} are given by the minimizer of the free energy $\F$ as $t\to\infty$, whenever the potentials are uniformly convex as in one of the earliest applications of these equations in granular media modelling \cite{BCP,BCCP,carrillo2003kinetic,LiTo}. Both equations,  \eqref{eq:euler} and \eqref{eq:granular}, also find numerous applications in mathematical biology and technology such as swarming of animal species, cell movement by chemotaxis, self-assembly of particles and dynamical density functional theory (DDFT)---see for instance \cite{review2,GPK,HP,reviewphysd} and the references therein.

Explicit equilibration rates for the aggregation-diffusion equation \eqref{eq:granular} have been derived using entropy dissipation methods \cite{carrillo2003kinetic, cordero2004inequalities} or, more recently, by using contraction estimates in the 2-Wasserstein distance \cite{bolley2014nonlinear, bolley2012convergence, bolley2013uniform} under convexity assumptions on the potentials. The entropy dissipation method is based on studying the time derivative of an appropriate Lyapunov functional along the flow generated by the aggregation-diffusion equation \eqref{eq:granular}, and using functional inequalities to bound the dissipation from below in terms of the Lyapunov functional. Actually, the solutions of \eqref{eq:granular} formally satisfy the free energy dissipation
\begin{equation}\label{eq:disgradflow}
\frac{d}{dt}\F(\bar\rho_t) = -\int_{\R^d} |\nabla(\delta_\mu\F)(\bar\rho_t)|^2 \bar\rho_t\,dx\,.
\end{equation}
Heuristically, one may view solutions of \eqref{eq:granular} as gradient flows of the free energy $\F$ on the space of probability measures, endowed with the 2-Wasserstein distance \cite{ambrosio2008gradient, carrillo2006contractions, otto2001geometry}. On the other hand, the method of using Wasserstein contraction estimates introduced in \cite{bolley2012convergence, bolley2013uniform,carrillo2006contractions}, is based on comparing the 2-Wasserstein distance with its dissipation along the evolution. This theory can deal with displacement convex functionals, as introduced in the seminal paper of McCann \cite{mccann1997convexity}, which include certain non uniformly convex potentials \cite{CDFLS,carrillo2012mass}. However, much less is known in terms of rates of convergence if uniform convexity of the potentials is not present, see \cite{BCLR2,BCL,BLR,CCP,CHVY} and the references therein for blow-up time, equilibrium solutions and qualitative covergence results.

In fact, equation \eqref{eq:granular} may be seen as an overdamped limit ($\gamma\to+\infty$) of the damped Euler equation \eqref{eq:euler} and have been studied in \cite{coulombel2007strong, junca2002strong} for the isothermal pressure law case ($U(s)=s\log s$, $V=W\equiv0$).  It was shown that the solutions to the damped isothermal Euler equations converge to that of the heat equation. For the isentropic pressure law case ($U(s)=s^m$, $m>1$, $V=W\equiv0$), the convergence to the porous media equation was discussed in \cite{lin2013strong} and \cite{marcati1990one} in one and multi-dimensions, respectively. Some particular cases in one spatial dimension have received a lot of attention due to the appearance of $\delta$-shocks, and their application to sticky particles \cite{brenier2013sticky,brenier1998sticky} or to consensus/contagion in swarming/crowd models \cite{BRSW,CCTT,carrillo2016pressureless}.

The objective of this paper is to develop contraction estimates in the 2-Wasserstein distance in the presence of uniform convexity of the potentials, in order to (a) prove convergence to equilibrium results for the damped Euler equations \eqref{eq:euler} in its full generality, and (b) to prove the overdamped limit ($\gamma\to\infty$) of \eqref{eq:euler} to \eqref{eq:granular} after suitable scaling. The general idea in handling both problems stems from viewing \eqref{eq:euler} as a damped harmonic oscillator for the pair $(\rho,u)$ with energy
\[
\H(\rho_t,u_t) := \F(\rho_t) + \frac{1}{2}\int_{\R^d} |u_t|^2\rho_t\,dx,
\]
which plays the role of a mathematical entropy and provides for a Lyapunov functional of \eqref{eq:euler}. Indeed, for smooth solutions $(\rho,u)$ of \eqref{eq:euler}, the identity
\begin{align}\label{eq:energy_estimate}
\frac{d}{dt}\H(\rho_t,u_t) = -\gamma\int_{\R^d} |u_t|^2 \rho_t\,dx \le 0,
\end{align}
holds. Although this inequality clearly states the dissipation of $\H$ with time $t\ge 0$, one cannot conclude the convergence to (global) equilibrium, since the right-hand side vanishes at local equilibria $(u_t \equiv 0)$. Integrating \eqref{eq:energy_estimate} gives the estimate
\[
\F(\rho_t) + \frac{1}{2}\int_{\R^d}|u_t|^2\rho_t\,dx + \gamma\int_0^t\int_{\R^d} |u_s|^2\rho_s\,dx\,ds \le \F(\rho_0) + \frac{1}{2}\int_{\R^d}|u_0|^2\rho_0\,dx,
\]
for all $t\ge 0$. A solution $(\rho, u)$ of \eqref{eq:euler} satisfying this estimate is called an {\em energy decaying solution} in the sequel. We will assume that these solutions exist globally in time with certain regularity for their velocity fields. We emphasize otherwise that our results hold without any smallness assumption on the initial data or closeness assumption to equilibrium solutions.

A good intuition for our strategy comes from the finite-dimensional setting. It is well-known that finite-dimensional gradient flows of uniformly convex energy landscapes enjoy exponential equilibration towards their unique global minimum. More precisely, assume $E:\R^d \longrightarrow \R$ to be a uniformly $\C^2$ convex function achieving its global minimum at zero with $D^2 E \geq \lambda I_d$, for some $\lambda>0$. Then, a good quantity to estimate the decay to zero of all solutions is given by the euclidean distance of a trajectory of the gradient flow $\dot x = -\nabla E(x)$ to the origin. Actually, one can show that
$$
\frac{d}{dt} |x(t)|^2 \leq -\lambda |x(t)|^2\qquad \mbox{for all } t\geq 0.
$$
The gradient flow $\dot x = -\nabla E(x)$ is the finite-dimensional counterpart of the aggregation-diffusion equation \eqref{eq:granular}. For the damped Euler system \eqref{eq:euler}, the finite dimensional counterpart is the classical damped oscillator $\dot x=v$, $\dot v=-\nabla E(x) -\gamma v$. Observe that the energy $E(x)$ is dissipated by the gradient flow $\dot x = -\nabla E(x)$, that is, $\frac{d}{dt} E(x) = -|\nabla E(x)|^2$ that resembles the gradient flow structure of \eqref{eq:granular} and its dissipation \eqref{eq:disgradflow}. In the case of the classical damped oscillator, we have the following dissipation of the total energy
$$
\frac{d}{dt}\left[\frac12 |v(t)|^2 + E(x(t))\right] = -\gamma |v(t)|^2 \qquad \mbox{for all } t\geq 0\,,
$$
that resembles \eqref{eq:energy_estimate}. Since the quantity $|x(t)|^2$ was a good measure of the equilibration of the gradient flow equation, it seems quite natural to check if it is also the case for the classical damped oscillator. In fact, one can show that
$$
\frac{d^2}{dt^2} |x(t)|^2 + \gamma \frac{d}{dt} |x(t)|^2 + \lambda |x(t)|^2 \leq 2|v(t)|^2
\qquad \mbox{for all } t\geq 0\,.
$$
This relation together with the energy identity implies the convergence, without rate, for the solutions of the classical damped oscillator towards the origin. Its proof will be discussed in Section 4 in the framework of solutions to the Euler equation \eqref{eq:euler}.

To analyse the evolution of probability measures, it is classical that the euclidean Wasserstein distance towards the global equilibrium of the free energy $\F$ plays the role of the euclidean distance in $\R^d$ to the origin. Therefore, motivated by the finite dimensional computation above and the work in \cite{klar2015entropy} (cf.~\cite{villani2009hypocoercivity}), we construct a Lyapunov functional based on the weighted sum of the energy $\H$, the 2-Wasserstein distance and its temporal derivative. In particular, we will require an estimate for the second-order temporal derivative of the 2-Wasserstein distance, which is provided by Theorem~\ref{thm:wasserstein_tt} in Section~\ref{sec:wasserstein}. Roughly speaking, it states that for solutions $(\rho,u)$ satisfying \eqref{eq:euler}, the second-order temporal derivative of the 2-Wasserstein distance between $\mu_t$, with density $\rho_t$, and any probability measure $\sigma$ with finite second moment, is given by
\[
 \frac{1}{2}\frac{d^+}{dt}\frac{d}{dt}W_2^2(\mu_t,\sigma) + \frac{\gamma}{2}\frac{d}{dt}W_2^2(\mu_t,\sigma) \le \int_{\R^d} |u_t|^2 d\mu_t - \int_{\R^d} \langle T_t(y)-y,\nabla(\delta_\mu\F)(\mu_t)\circ T_t(y)\rangle\,d\sigma,
\]
where $T_t\colon\R^d\to\R^d$ is an optimal transport map between $\mu_t$ and $\sigma$, satisfying $T_t\#\sigma=\mu_t$, i.e., $\mu_t$ is the push-forward of $\sigma$ under the map $T_t$.

When $V$ and $W$ satisfy certain $\lambda$-convexity assumptions ({\bf (H1)} and {\bf (H2)} below) and $\sigma=\mu_\infty$ is a sufficiently smooth minimizer of $\F$, i.e., $\mu_\infty$, with density $\rho_\infty$, satisfies $\mu_\infty\nabla(\delta_\mu\F)(\mu_\infty)=0$, then the previous estimate reduces to (cf. Corollary~\ref{cor:wasserstein_tt})
\[
 \frac{1}{2}\frac{d^+}{dt}\frac{d}{dt}W_2^2(\mu_t,\mu_\infty) + \frac{\gamma}{2}\frac{d}{dt}W_2^2(\mu_t,\mu_\infty) \le \int_{\R^d} |u_t|^2 d\mu_t - \frac{\lambda}{2}W_2^2(\mu_t,\mu_\infty),
\]
for some $\lambda>0$. It is this form of the estimate, along with estimate  \eqref{eq:energy_estimate}, that will be used to construct a strict Lyapunov function for the evolution, thereby resulting in the equilibration statements found in Theorems~\ref{thm:confinement} and \ref{thm:interaction} (see also Corollaries~\ref{cor:confinement} and \ref{cor:interaction}): For initial data $(\rho_0,u_0)$ with bounded energy and $W_2(\mu_0,\mu_\infty)<\infty$, one obtains
\[
 W_2(\mu_t,\mu_\infty)\longrightarrow 0\qquad\text{as\;\;$t\to\infty$}.
\]
In order to also deduce the convergence $\|u_t\|_{L^2(\mu_t)}\to 0$ as $t\to\infty$, one requires an additional assumption {\bf (H3)} on the relationship between $\F$ and $W_2$, and on the regularity of the solution.

A similar approach is used to prove the overdamped limit ($\gamma\to+\infty$) in Section~\ref{sec:relax}, where we compare a rescaled version of the solution $(\rho^\gamma,u^\gamma)$ to the Euler system \eqref{eq:euler} with the solution $\bar{\rho}$ of the granular media equation \eqref{eq:granular}. Since both $\rho^\gamma$ and $\bar{\rho}$ are time dependent, we extend the second-order estimate above to include measures $\sigma$ that evolve in time (Theorem~\ref{thm:wasserstein_tt_2}). This enables us to show in Theorem~\ref{thm:relaxation} that
\[
 \int_0^T W_2^2(\rho_t^\gamma dx,\bar\rho_t dx)\,dt\longrightarrow 0\qquad\text{as\;\;$\gamma\to\infty$},
\]
for any $T>0$ provided that the initial conditions are well-prepared.

The final section of the paper---Section~\ref{sec:1d}---gives rigorous proofs for particular examples in one dimension for which the calculus developed for equilibration in Section~\ref{sec:equilibration} provides (a) explicit exponential decay rates in the case when solutions are smooth, and (b) equilibration (without rates) whenever solutions form $\delta$-shocks in finite time. The example in (b) clearly illustrates the strength of the calculus even for the one dimensional case, where standard tools fail due to lack of regularity. In short, we show that all global in time Lagrangian solutions in the sense of \cite{brenier2013sticky} converge in $W_2$ towards a Dirac Delta at the center of mass of the initial density.

%
%

\section{Preliminary results}\label{sec:prelim}

We begin this section by introducing known results and stating restrictions on the free energy $\F$ that will be assumed throughout this paper. The next part of this section describes the general strategy applied to a toy example.

\begin{definition}
 Let $\P_2(\R^d)$ denote the set of Borel probability measures on $\R^d$ with finite second moment, i.e., $\int |x|^2d\mu<\infty$ for all $\mu\in\P_2(\R^d)$. The 2-Wasserstein distance between two measures $\mu$ and $\nu$ in $\P_2(\R^d)$ is defined as
 \[
 W_2(\mu,\nu) = \inf\nolimits_{\pi\in \Pi(\mu,\nu) } \left(\iint_{\R^d\times\R^d} |x-y|^2 d\pi(x,y)\right)^{1/2},
 \]
 where $\Pi(\mu,\nu)$ denotes the collection of all Borel probability measures on $\R^d\times\R^d$ with marginals $\mu$ and $\nu$ on the first and second factors respectively. The set $\Pi(\mu,\nu)$ is also known as the set of all couplings of $\mu$ and $\nu\in\P_2(\R^d)$. We further denote by $\Pi_0(\mu,\nu)$ the set of optimal couplings between $\mu$ and $\nu$. The Wasserstein distance defines a distance on $\P_2(\R^d)$ which metricizes the narrow convergence, up to a condition on the moments. We denote the set of probability measures having finite second moment with Lebesgue densities by $\P_2^{ac}(\R^d)$.
\end{definition}

\subsection{Existence of stationary measures}\label{sec:stationary}

To emphasize on the presentation of the equilibration method, we do not consider the most general assumptions to ensure the existence of minimizers of the free energy $\F$. Throughout this paper, we assume the conditions below:
\begin{enumerate}
	\item[{\bf (H1)}] $U \in \C([0,\infty)) \cap \C^2((0,\infty))$ with $U(0) = 0$, and the function $r\mapsto r^dU(r^{-d})$ is convex nonincreasing on $(0,\infty)$, or equivalently, 
	\[
	(d-1)p(r) \leq d r p^\prime(r)\quad \text{on } (0,\infty),\qquad p(r)=rU^\prime(r) - U(r),
	\]
	or $r \mapsto r^{-1+1/d}p(r)$ is nondecreasing on $(0,\infty)$.
	
	\item[{\bf (H2)}] $V$ and $W$ are $\C^1(\R^d)$ potentials on $\R^d$ with $W(-x)=W(x)$ for all $x\in\R^d$, satisfying
	\begin{align*}\left.
	\begin{aligned}
	\langle x-y,\nabla V(x)-\nabla V(y)\rangle &\ge c_V|x-y|^2 \\
	\langle x-y,\nabla W(x)-\nabla W(y)\rangle &\ge c_W|x-y|^2 
	\end{aligned}\quad\right\rbrace\quad \text{for all } x,y\in\R^d,
	\end{align*}
	with either $c_V>0$ and $c_V + c_W>0$ if $V\not\equiv 0$, or $c_V=0$ and $c_W>0$ if $V\equiv 0$.
\end{enumerate}

Under these conditions, we have the following result found in \cite{ambrosio2008gradient,bolley2013uniform,carrillo2003kinetic,mccann1997convexity}.

\begin{proposition}\label{prop:stationary}
	The free energy $\F\colon \P_2(\R^d)\to (-\infty, +\infty]$ defined by \eqref{eq:entropy} for absolutely continuous measures (w.r.t.~the Lebesgue measure) and by $+\infty$ otherwise achieves its minimum. A minimizer $\mu_\infty$ of $\F$ has a non-negative density $\rho_\infty$ on $\R^d$ satisfying
	\[
	 \nabla p(\rho_\infty) + \rho_\infty(\nabla V + \nabla W\star\rho_\infty)=0\qquad \text{a.e.}
	\]
In particular, we have
	\[
	 U^\prime(\rho_\infty) + V + W\star\rho_\infty = c\qquad \text{$\mu_\infty$-a.e.}
	\]
for some constant $c\in\R$.
\end{proposition}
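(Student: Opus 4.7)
The plan is to apply the direct method of the calculus of variations in the Wasserstein space $(\P_2(\R^d),W_2)$, relying on the displacement convexity of $\F$ produced by the hypotheses. Under \textbf{(H1)}, the internal energy $\mu\mapsto\int U(\rho)\,dx$ is displacement convex in the sense of McCann, while \textbf{(H2)} renders $V$ and $W$ uniformly convex, so that $\mu\mapsto\int V\,d\mu$ is $c_V$-geodesically convex and, by the evenness of $W$, the interaction energy is $c_W$-convex along generalized geodesics. Consequently $\F$ is $(c_V+c_W)$-displacement convex when $V\not\equiv 0$, and $c_W$-convex modulo translations when $V\equiv 0$; this yields strict geodesic convexity, which in particular guarantees \emph{uniqueness} of the minimizer (modulo translations in the second case) once existence is established.

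For existence, I would take a minimizing sequence $(\mu_n)\subset\P_2(\R^d)$ and establish tightness from the uniform convexity of $V$ (or of the translation-reduced interaction in the $V\equiv 0$ case). The inequality $V(x)\ge V(x_0)+\langle\nabla V(x_0),x-x_0\rangle+\tfrac{c_V}{2}|x-x_0|^2$ deduced from \textbf{(H2)} yields a uniform bound on $\int|x|^2\,d\mu_n$; an analogous argument applies to the interaction term after fixing the centre of mass in the case $V\equiv 0$. The internal energy is bounded below by standard arguments based on \textbf{(H1)} (see \cite{mccann1997convexity}). Prokhorov's theorem then supplies a narrowly convergent subsequence $\mu_n\rightharpoonup\mu_\infty\in\P_2(\R^d)$, and the narrow lower semicontinuity of each of the three summands---again relying on \textbf{(H1)} for the internal energy, cf.~\cite{ambrosio2008gradient}---gives $\F(\mu_\infty)\le\liminf_n\F(\mu_n)$, so $\mu_\infty$ is a minimizer. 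Finiteness of $\F(\mu_\infty)$ together with the convention $\F\equiv+\infty$ on singular measures forces $\mu_\infty$ to be absolutely continuous with a non-negative density $\rho_\infty$.

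The Euler--Lagrange identity is obtained by testing first variations. For $\varphi\in\C_c^\infty(\R^d)$ with $\int\varphi\,dx=0$ and supported in the interior of $\{\rho_\infty>0\}$, the perturbation $\rho_\infty+\varepsilon\varphi$ remains a probability density for $|\varepsilon|$ small, and differentiating $\F(\rho_\infty+\varepsilon\varphi)$ at $\varepsilon=0$ yields
\[
 0 = \int_{\R^d}\bigl(U^\prime(\rho_\infty)+V+W\star\rho_\infty\bigr)\,\varphi\,dx,
\]
so that $U^\prime(\rho_\infty)+V+W\star\rho_\infty$ equals a constant $c\in\R$ on $\{\rho_\infty>0\}$, which is the second identity $\mu_\infty$-a.e. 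Differentiating this algebraic relation distributionally on the interior of the support, multiplying by $\rho_\infty$, and using the identity $\rho\,\nabla U^\prime(\rho)=\nabla p(\rho)$ (immediate from $p(r)=rU^\prime(r)-U(r)$), produces $\nabla p(\rho_\infty)+\rho_\infty(\nabla V+\nabla W\star\rho_\infty)=0$ almost everywhere.

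The main obstacle I foresee lies in the case $V\equiv 0$, where $\F$ is translation invariant and a priori minimizing sequences can escape to infinity; this is circumvented by quotienting by translations or fixing the centre of mass, after which $c_W>0$ restores coercivity. A secondary subtle point is the narrow lower semicontinuity of $\int U(\rho)\,dx$, which depends essentially on the McCann form of \textbf{(H1)} through the convex nonincreasing character of $r\mapsto r^d U(r^{-d})$; this is standard but not elementary, and justifies appealing directly to the established theory in \cite{ambrosio2008gradient,carrillo2003kinetic,mccann1997convexity}.
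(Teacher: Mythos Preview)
Your sketch is sound and follows the standard route (direct method plus displacement convexity, then first-variation conditions), but there is nothing to compare it against: the paper does not give its own proof of this proposition. It is stated as a known result ``found in \cite{ambrosio2008gradient,bolley2013uniform,carrillo2003kinetic,mccann1997convexity}'' and left unproved, with the authors explicitly noting that they ``do not consider the most general assumptions to ensure the existence of minimizers''. Your outline is essentially a summary of the arguments in those references, and your final paragraph even acknowledges this by deferring the lower-semicontinuity step to the same sources.

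One small technical remark on your Euler--Lagrange derivation: the additive perturbation $\rho_\infty+\varepsilon\varphi$ with $\varphi$ compactly supported in the interior of $\{\rho_\infty>0\}$ tacitly assumes that $\rho_\infty$ is bounded away from zero on such compact sets, which is not automatic from the hypotheses. The cited references typically avoid this by using transport (push-forward) perturbations $\mu_\varepsilon=(id+\varepsilon\xi)\#\mu_\infty$ instead, which directly yields the weak form $\nabla p(\rho_\infty)+\rho_\infty(\nabla V+\nabla W\star\rho_\infty)=0$ without needing interior positivity; the pointwise statement $U'(\rho_\infty)+V+W\star\rho_\infty=c$ on the support then follows. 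This is a minor point of presentation rather than a genuine gap.
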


\begin{remark}\label{rmk_1}
 For isothermal/isentropic flows in fluid dynamics the pressure law $p(\rho)$ is typically prescribed by
 \[
 p(\rho) = \rho^m \quad \mbox{with} \quad m \geq 1.
 \]
 In this case, the internal energy $U$ is uniquely given by
 \begin{align*}
 U(\rho) = \begin{cases}
 \rho\log \rho &\text{if } m=1,\\
 \rho^m/(m-1) &\text{if } m>1,
 \end{cases}
 \end{align*}
 and condition {\bf (H1)} takes the form $m\ge 1-1/d$ which is now classical. Consequently, the isentropic pressure satisfies condition {\bf (H1)} for any $m\ge 1$.
\end{remark}

\begin{remark}
	In the seminal work \cite{mccann1997convexity}, McCann showed that assumption {\bf (H1)} is equivalent to the requirement that $\int_{\R^d} U(\rho)\,dx$ is {\em (geodesically) displacement convex} on $(\P_2(\R^d), W_2)$. 
\end{remark}

\subsection{Equilibration of the center of mass}\label{sec:energy}
Here, we provide a simple construction of a strict Lyapunov functional for a toy example, for which we obtain exponential decay rates of the center of mass of $\mu$, with density $\rho$, where $(\rho,u)$ solves \eqref{eq:euler} with $V(x)=c_V|x-\bar x|^2/2$, $c_V>0$ for a given $\bar x\in\R^d$ and $W\equiv 0$. In this case, the free energy is given by
\[
 \F(\mu) = \int_{\R^d} U(\rho)\,dx + \frac{c_V}{2}\int_{\R^d} |x-\bar x|^2\,d\mu,
\]
with variational derivative $(\delta_{\mu}\F)(\mu) = U'(\rho) + c_V(x-\bar x)$.

By examining the evolution of the center of mass of $\mu_t$, we find that
\begin{align}\label{eq:oscillator}
 \frac{d}{dt} \int_{\R^d} x\,d\mu_t = \int_{\R^d} u_t \, d\mu_t,\qquad \frac{d}{dt} \int_{\R^d} u_t \, d\mu_t = -\int_{\R^d} \nabla V\,d\mu_t - \gamma \int_{\R^d} u_t\,d\mu_t.
\end{align}
which clearly resembles a damped harmonic oscillator. Note that the computations above are done component-wise, i.e.,
\[
 \frac{d}{dt} \int_{\R^d} x_i\,d\mu_t = \int_{\R^d} u_{i,t} \, d\mu_t\qquad\text{for all\; $i=1,\ldots,d$}.
\]
From this observation, one may easily deduce the exponential convergence of the center of mass of $\mu_t$ towards $\bar x\in\R^d$ at a rate $\lambda$ that has the properties $\lambda(\gamma)\to 0$ as $\gamma\to 0$ and $\gamma\to+\infty$. 

Indeed, it follows from \eqref{eq:oscillator} that 
\[
\frac{d^2}{dt^2} \int_{\R^d} (x-\bar x)\,d\mu_t  + \gamma \frac{d}{dt}\int_{\R^d} (x-\bar x)\,d\mu_t  + c_V\int_{\R^d} (x-\bar x)\,d\mu_t  = 0.
\]
Thus, from classical theory of differential equations, we may solve the second-order linear equation to obtain explicit decay rates. However, we use an alternative approach to estimate the decay rate, which simultaneously illustrates the basic idea behind our strategy. First of all, notice that
\[
 \frac12\frac{d}{dt}\lt[c_V\left|\int_{\R^d} (x-\bar x)\,d\mu_t\right|^2 + \left|\int_{\R^d} u_t \, d\mu_t\right|^2 \rt] = -\gamma \left|\int_{\R^d} u_t \, d\mu_t\right|^2,
\]
i.e., we are in the same conditions as in equation \eqref{eq:energy_estimate}. Now consider the temporal derivative of
\begin{align*}
 \J(t) &:=\alpha \left|\int_{\R^d} (x-\bar x)\,d\mu_t\right|^2 + 2\left(\int_{\R^d} (x-\bar x)\,d\mu_t\right)\cdot\left(\int_{\R^d} u_t \, d\mu_t\right) + \beta \left|\int_{\R^d} u_t \, d\mu_t\right|^2\\
 &=: \alpha \J_1(t) + 2\J_2(t) + \beta\J_3(t)
\end{align*}
where $\alpha,\beta>0$ are to be chosen appropriately. We note that for $\alpha\beta>1$, we have the equivalence 
\[
 p(\J_1 + \J_3) \le \J\le q(\J_1 + \J_3),
\]
for some constant $p,q>0$, depending only on $\alpha$ and $\beta$. Simple computations yield
\[
 \frac{d}{dt}\J(t) = -2c_V\J_1(t) + 2(\alpha -\beta c_V - \gamma) \J_2(t) - 2(\beta\gamma - 1)\J_3(t).
\]
Choosing $\beta=(1+c_V)/\gamma$ and $\alpha=\beta c_V + \gamma$, and using the fact that $\alpha\beta\ge 1+ c_V>1$, we obtain
\[
 \frac{d}{dt}\J(t) = -2 c_V(\J_1(t) +\J_3(t)) \le -2( c_V/q) \J(t).
\]
A simple application of the Grownwall inequality provides the exponential decay
\[
 \J(t) \le \J(0)e^{-(2c_V/q) t},\qquad q = \frac{(\alpha+\beta)+\sqrt{4+(\beta-\alpha)^2}}{2}.
\]
With the explicit choice of $\alpha$ and $\beta$, one easily examines the $\gamma$ dependent decay rate.

\begin{remark}
 Notice that while the above computations provide exponential convergence of the center of mass of $\mu$ and the momentum $u\mu$ towards $(\bar x,0)$, nothing can be said about $(\rho,u)$ itself.
\end{remark}

\section{Temporal derivatives of the Wasserstein distance}\label{sec:wasserstein}
Before we show any convergence results, we extend a basic result regarding the time derivatives of the Wasserstein distance between two evolving measures \cite{ambrosio2008gradient, bolley2012convergence, bolley2013uniform, villani2008optimal}.  
We begin by recalling a known result for the first temporal derivative \cite{ambrosio2008gradient}, \cite[Theorem~23.9]{villani2008optimal}.

\begin{proposition}\label{prop:wasserstein_t}
	Let $\mu,\nu\in\C([0,\infty),\P_2^{ac}(\R^d))$ be solutions of the continuity equations
	\[
	\partial_t\mu_t + \nabla\cdot(\mu_t\xi_t)=0,\qquad \partial_t\nu_t + \nabla\cdot(\nu_t\eta_t)=0,\qquad\text{in distribution},
	\]
	for locally Lipschitz vector fields $\xi$ and $\eta$ satisfying 
	\[
	\int_0^\infty \left(\int_{\R^d} |\xi_t|^2 d\mu_t + \int_{\R^d}|\eta_t|^2 d\nu_t\right)\,dt<\infty,
	\]
	then $\mu,\nu\in AC([0,\infty),\P_2^{ac}(\R^d))$ and for almost every $t\in(0,\infty)$,
	\begin{align}\label{eq:1st_derivative}
	\frac{1}{2}\frac{d}{dt}W_2^2(\mu_t,\nu_t) &= \iint_{\R^d\times\R^d} \langle x-y,\xi_t(x)-\eta_t(y)\rangle\,d\pi_t\\
	&=\int_{\R^d} \langle x-\nabla\varphi_t^*(x),\xi_t(x)\rangle\, d\mu_t + \int_{\R^d} \langle y-\nabla\varphi_t(y),\eta_t(y)\rangle\, d\nu_t \nonumber,
	\end{align}
	where $\pi_t\in\Pi_0(\mu_t,\nu_t)$ and $\nabla\varphi_t\#\nu_t=\mu_t$, $\nabla\varphi_t^*\#\mu_t=\nu_t$.
\end{proposition}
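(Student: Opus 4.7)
The plan is to derive the formula in two stages: first prove the coupling-integral representation (the first line of \eqref{eq:1st_derivative}) by comparing $W_2^2(\mu_{t+h},\nu_{t+h})$ with a transport plan built from the characteristics of $\xi$ and $\eta$, and then disintegrate the resulting integral via Brenier's theorem to obtain the two single-measure integrals in the second line.

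Concretely, I would first verify absolute continuity of $\mu$ and $\nu$. Since the vector fields are locally Lipschitz, for each $s\ge 0$ the ODEs $\dot z = \xi_r(z)$ and $\dot z = \eta_r(z)$ admit flow maps $X_s^t$ and $Y_s^t$, and by uniqueness of solutions to the continuity equation one has $\mu_t = X_s^t\#\mu_s$ and $\nu_t = Y_s^t\#\nu_s$. The trivial coupling $(id, X_s^t)_\#\mu_s \in \Pi(\mu_s,\mu_t)$ yields the standard bound
\[
W_2^2(\mu_s,\mu_t) \le (t-s)\int_s^t\int_{\R^d}|\xi_r|^2 d\mu_r\,dr,
\]
which together with the $L^2$-in-time hypothesis gives $\mu,\nu \in AC([0,\infty),\P_2^{ac}(\R^d))$ with metric derivatives controlled by $\|\xi_t\|_{L^2(\mu_t)}$ and $\|\eta_t\|_{L^2(\nu_t)}$. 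Next, at any Lebesgue point $t$ of $s\mapsto(\xi_s,\eta_s)$ in the respective $L^2$ spaces, let $\pi_t\in\Pi_0(\mu_t,\nu_t)$. The product flow pushes $\pi_t$ to a (generally non-optimal) coupling of $\mu_{t+h}$ and $\nu_{t+h}$, so
\[
W_2^2(\mu_{t+h},\nu_{t+h}) - W_2^2(\mu_t,\nu_t) \le \iint_{\R^d\times\R^d}\bigl[|X_t^{t+h}(x)-Y_t^{t+h}(y)|^2 - |x-y|^2\bigr]d\pi_t.
\]
Expanding the flows as $X_t^{t+h}(x) = x + h\xi_t(x) + o(h)$ and similarly for $Y$ at Lebesgue points, dividing by $2h$ and sending $h\downarrow 0$ yields an upper bound by the desired coupling integral. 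The matching lower bound comes from starting with an optimal coupling at time $t+h$ and transporting it back to time $t$ via the inverse flows; together these produce differentiability for a.e.~$t$ and the first equality in \eqref{eq:1st_derivative}.

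For the second equality I would invoke Brenier's theorem. Since $\mu_t,\nu_t\in\P_2^{ac}(\R^d)$, the unique optimal coupling is given by the Brenier maps, namely $\pi_t = (id,\nabla\varphi_t^*)_\#\mu_t = (\nabla\varphi_t,id)_\#\nu_t$, where $\nabla\varphi_t^*\#\mu_t = \nu_t$ and $\nabla\varphi_t\#\nu_t = \mu_t$. Splitting $\langle x-y,\xi_t(x)-\eta_t(y)\rangle$ into its $\xi$- and $\eta$-parts and disintegrating the first against $\mu_t$ (replacing $y$ by $\nabla\varphi_t^*(x)$) and the second against $\nu_t$ (replacing $x$ by $\nabla\varphi_t(y)$) produces exactly the two single-measure integrals. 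The principal obstacle will be controlling the Taylor remainder in the flow expansion uniformly enough in $(x,y)$ to integrate against $\pi_t$, whose support need not be compact; I would handle this by working at Lebesgue points of $t\mapsto\xi_t$ and $t\mapsto\eta_t$ in their $L^2$ spaces and combining the local Lipschitz estimates with a truncation argument dominated by the assumed $L^2$-in-time integrability.
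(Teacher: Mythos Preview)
Your proposal is correct and rigorous. It is also essentially the standard argument behind the cited references (Ambrosio--Gigli--Savar\'e and Villani), so there is no gap to flag.

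The paper, however, does not prove this proposition: it presents it as a known result and only supplies \emph{heuristical ideas}. Those heuristics proceed along a different route from yours. Rather than pushing an optimal coupling forward by the Lagrangian flows of $\xi$ and $\eta$ and expanding $|X_t^{t+h}(x)-Y_t^{t+h}(y)|^2$, the paper works directly with the time-dependent Brenier potentials $\varphi_t,\varphi_t^*$. It differentiates the change-of-variables identity $\int g(t,\nabla\varphi_t(y))\,d\nu_t=\int g(t,x)\,d\mu_t$ in $t$, then chooses the Kantorovich-type test function $g(t,x)=|x|^2/2-\varphi_t^*(x)$ (and its dual $h(t,y)=|y|^2/2-\varphi_t(y)$) together with the Young relation $\varphi_t(y)+\varphi_t^*(\nabla\varphi_t(y))=\langle\nabla\varphi_t(y),y\rangle$ to reach $\tfrac12\tfrac{d}{dt}W_2^2(\mu_t,\nu_t)=\int g\,d(\partial_t\mu_t)+\int h\,d(\partial_t\nu_t)$, and finally inserts the continuity equations. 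This is elegant but formal, since it silently assumes differentiability of $t\mapsto\varphi_t$. Your flow-and-difference-quotient argument avoids that issue entirely, at the price of the remainder control you already identified; it is also the same mechanism the paper itself makes rigorous later in Theorem~\ref{thm:wasserstein_tt} for the second derivative. In short: both routes are valid, the paper's is more compact and potential-based while yours is the one that actually justifies the formula.
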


\begin{remark}
	Note that when $\xi$ and $\eta$ are globally Lipschitz vector fields, then the first temporal derivative \eqref{eq:1st_derivative} holds for all $t\in(0,\infty)$. We will implicitly use this fact in Theorem~\ref{thm:wasserstein_tt} below.
\end{remark}

\subsubsection*{Heuristical ideas}
For any two given measures $\mu_t,\nu_t\in\P_2^{ac}(\R^d)$, Brenier's theorem \cite{brenier1991polar,mccann1995existence} asserts the existence of a (proper) convex function $\varphi_t\colon \R^d\to(-\infty,+\infty]$ such that $\nabla\varphi_t\#\nu_t=\mu_t$ and $\nabla\varphi_t^*\#\mu_t=\nu_t$, where $\varphi_t(x)^*=\sup_{\R^d}\{\langle x,y\rangle -\varphi(y)\}$ is the Legendre--Fenchel dual of $\varphi_t$ satisying
\[
 (\nabla \varphi_t^* \circ \nabla \varphi_t )(y) = y\qquad \nu_t\text{-a.e.}
\]
In particular, we have the change of variables formula 
\[
 \int_{\R^d} g(t,\nabla\varphi_t(y))\,d\nu_t = \int_{\R^d} g(t,x)\,d\mu_t,
\]
for any test function $g\in \C_b(\R_+\times\R^d)$. Taking the temporal derivative gives
\begin{align}\label{eq:brenier_t_2}
 \int_{\R^d} \langle \nabla g(t,\nabla\varphi_t(y)),\partial_t\nabla\varphi_t(y)\rangle\,d\nu_t + \int_{\R^d} g(t,\nabla\varphi_t(y))\,d(\partial_t\nu_t) = \int_{\R^d} g(t,x)\,d(\partial_t\mu_t).
\end{align}
By choosing $g(t,x) = |x|^2/2 - \varphi_t^*(x)$, we obtain from \cite{villani2008optimal} (see also \cite{ambrosio2008gradient})
\begin{align*}
 \frac{1}{2}\frac{d}{dt}W_2^2(\mu_t,\nu_t) &= \frac{1}{2}\frac{d}{dt}\int_{\R^d} |\nabla\varphi_t(y)-y|^2d\nu_t \\
 &= \int_{\R^d} \langle \nabla\varphi_t(y)-y
,\partial_t\nabla\varphi_t(y)\rangle\,d\nu_t + \frac{1}{2}\int_{\R^d} |\nabla\varphi_t(y)-y|^2 d(\partial_t\nu_t)\\
 &= \int_{\R^d} g(t,x)\,d(\partial_t\mu_t) - \int_{\R^d} g(t,\nabla\varphi_t(y))\,d(\partial_t\nu_t) + \frac{1}{2}\int_{\R^d} |\nabla\varphi_t(y)-y|^2 d(\partial_t\nu_t).
\end{align*}
The last two terms on the right hand side may be expressed as
\[
 \int_{\R^d} \left(\frac{|y|^2}{2}- \langle \nabla\varphi_t(y),y\rangle + \varphi_t^*(\nabla\varphi_t(y)) \right) d(\partial_t\nu_t).
\]
Since $\varphi_t$ and $\varphi_t^*$ are duals of each other, we have that
\begin{align*}
 \varphi_t(y) + \varphi_t^*(\nabla\varphi_t(y)) = \langle \nabla\varphi_t(y),y\rangle.
\end{align*}
Consequently, we obtain
\[
 \frac{1}{2}\frac{d}{dt}W_2^2(\mu_t,\nu_t) = \int_{\R^d} g(t,x)\,d(\partial_t\mu_t) + \int_{\R^d} h(t,y)\,d(\partial_t\nu_t),
\]
with $h(t,y) = |y|^2/2 - \varphi_t(y)$. Finally, inserting the respective continuity equations and integrating by parts yield the required equality in \eqref{eq:1st_derivative}. The absolute continuity of $t\mapsto \mu_t$ and $t\mapsto \nu_t$ follows directly from \cite[Theorem~23.9]  {villani2008optimal}. In fact, they are shown to be H\"older-$1/2$ continuous.

The next part of this section is devoted to the representation of the second temporal derivative for the 2-Wasserstein distance along the flow of generic Euler equations of the form
\begin{align*}
 \partial_t\mu_t + \nabla\cdot(\mu_t\xi_t) &=0, \\
 \partial_t(\mu_t\xi_t) + \nabla\cdot(\mu_t\xi_t\otimes\xi_t) &= -\mu_t G_{\mu_t},
\end{align*}
where $G_\mu = G_\mu(t,x)$ is a sufficiently smooth function.

\begin{remark}
 In our particular case \eqref{eq:euler}, $G_\mu$ is related to the variational derivative of the free energy $\F$ and takes the form
 \[
  G_\mu(\mu,\xi) = \nabla (\delta_{\mu}\F)(\mu) + \gamma\xi = \nabla\big(U^\prime(\rho) + V + W\star\mu\big) + \gamma \xi\,,
 \]
with $\rho$ being the density of $\mu$.
\end{remark}

\subsubsection*{Heuristical ideas}
To simplify the notations, we set
\[
 T_t(y):=\nabla\varphi_t(y),\qquad T_t^*(x):=\nabla\varphi_t^*(x).
\]
Assuming that $\mu$ and $\nu$ satisfy the generic Euler equations
\begin{align*}
  \begin{aligned}
   \partial_t\mu_t + \nabla\cdot(\mu_t\xi_t) &=0, \\
   \partial_t(\mu_t\xi_t) + \nabla\cdot(\mu_t\xi_t\otimes\xi_t) &= -\mu_t G_{\mu_t}\,,
  \end{aligned}\qquad
  \begin{aligned}
   \partial_t\nu_t + \nabla\cdot(\nu_t\eta_t)&=0,\\
   \partial_t(\nu_t\eta_t) + \nabla\cdot(\nu_t\eta_t\otimes\eta_t) &= -\nu_t G_{\nu_t}\,,
  \end{aligned}
\end{align*}
with sufficiently smooth velocity fields $\xi_t$ and $\eta_t$, we deduce from \eqref{eq:brenier_t_2} that
\[
 \int_{\R^d} \langle \nabla g(t,T_t(y)),\partial_t T_t(y) + \nabla T_t(y)\eta_t(y) - \xi_t(T_t(y))\rangle\,d\nu_t = 0,
\]
for all smooth test functions $g\in\C^1_b(\R\times\R^d)$. This essentially means
\begin{align}\label{eq:characteristics}
 \partial_t T_t(y) + \nabla T_t(y)\eta_t(y) = \xi_t(T_t(y))\qquad \nu_t\text{-a.e.}
\end{align}
Let us now consider the second temporal derivative of the Wasserstein distance, i.e., we formally take the temporal derivative of \eqref{eq:1st_derivative} to obtain
\begin{align}\label{eq:wasserstein_tt_2}
\begin{aligned}
 \frac{1}{2}\frac{d^2}{dt^2}W_2^2(\mu_t,\nu_t) = &\, -\int_{\R^d} \langle \partial_t T_t^*(x),\xi_t(x)\rangle\, d\mu_t  - \int_{\R^d} \langle \partial_t T_t(y),\eta_t(y)\rangle\, d\nu_t \\
 &+ \int_{\R^d} \langle x-T_t^*(x),\partial_t(\mu_t\xi_t)\rangle  + \int_{\R^d} \langle y-T_t(y),\partial_t(\nu_t\eta_t)\rangle.
\end{aligned}
\end{align}
For the first term, we notice the fact that
\[
 0=\partial_t(T_t^*\circ T_t)(y) = \partial_t T_t^*(T_t(y)) + \nabla T_t^*(T_t(y))\partial_t T_t(y).
\]
Using the previous equality, \eqref{eq:characteristics} and the fact that $\nabla T_t^*(T_t(y))\nabla T_t(y) = \mathbb{I}_d$, we obtain
\begin{align*}
 \int_{\R^d} \langle \partial_t T_t^*(x),\xi_t(x)\rangle\, d\mu_t &= \int_{\R^d} \langle \partial_t T_t^*(T_t(y)),\xi_t(T_t(y))\rangle\, d\nu_t \\
 &= -\int_{\R^d} \langle \nabla T_t^*(T_t(y))\partial_t T_t(y),\xi_t(T_t(y))\rangle\, d\nu_t \\
 &= \int_{\R^d} \langle \nabla T_t^*(T_t(y))\Big( \nabla T_t(y)\eta_t(y) - \xi_t(T_t(y))\Big),\xi_t(T_t(y))\rangle\, d\nu_t \\
 &= \int_{\R^d} \langle \eta_t(y),\xi_t(T_t(y))\rangle\, d\nu_t - \int_{\R^d} \langle\nabla T_t^*(x)\xi_t(x),\xi_t(x)\rangle\, d\mu_t.
\end{align*}
Computing the second term analogously gives
\[
 \int_{\R^d} \langle \partial_t T_t(y),\eta_t(y)\rangle\, d\nu_t = -\int_{\R^d} \langle \nabla T_t(y)\eta_t(y),\eta_t(y)\rangle\, d\nu_t + \int_{\R^d} \langle\xi_t(T_t(y)),\eta_t(y)\rangle\, d\nu_t \,.
\]
The last two terms of \eqref{eq:wasserstein_tt_2} may be handled simultaneously to give
\begin{align*}
 & \int_{\R^d} \langle x-T_t^*(x),\partial_t(\mu_t\xi_t)\rangle  + \int_{\R^d} \langle y-T_t(y),\partial_t(\nu_t\eta_t)\rangle \\
 &\hspace*{6em}= \int_{\R^d} \langle (\mathbb{I}_d - \nabla T_t^*(x))\xi_t(x),\xi_t(x)\rangle\,d\mu_t - \int_{\R^d} \langle x-T_t^*(x),G_{\mu_t}\rangle\,d\mu_t \\
 &\hspace*{7em} + \int_{\R^d} \langle (\mathbb{I}_d - \nabla T_t(y))\eta_t(y),\eta_t(y)\rangle\,d\nu_t- \int_{\R^d} \langle y-T_t(y),G_{\nu_t}\rangle\,d\nu_t \,.
\end{align*}
The previous heuristic ideas can now be turned into the following theorem under the right assumptions.

\begin{theorem}\label{thm:wasserstein_tt}
	Let $\mu\in\C([0,T),\P_2^{ac}(\R^d))$ satisfy the Euler type equation
	\begin{align*}\left.
	\begin{aligned}
	\partial_t\mu_t + \nabla\cdot(\mu_t\xi_t) &=0, \\
	\mu_t\big(\partial_t\xi_t + \xi_t\cdot\nabla\xi_t\big) &= -\mu_t G_{\mu_t},
	\end{aligned}\;\;\right\rbrace\;\; \text{in distribution},
	\end{align*}
	with locally in $t>0$ and globally in $x\in\R^d$ Lipschitz vector field $x\mapsto \xi_t(x)$ satisfying
	\[
	t\mapsto\|\xi_t\|_{L^2(\mu_t)},\|G_{\mu_t}(t,\cdot)\|_{L^2(\mu_t)}\in\C([0,\infty))\cap L^2([0,\infty)).
	\]
	For any $\sigma\in\P_2(\R^d)$, define
	\[
	\mathcal{K}(\mu_t,\sigma) := \frac{d}{dt}W_2^2(\mu_t,\sigma) = 2\iint_{\R^d\times\R^d}\langle x-y,\xi_t(x)\rangle\,d\pi_t.
	\]
	Then for any $T>0$ the following inequality holds:
	\begin{align*}
	\mathcal{K}(\mu_T,\sigma) \le \mathcal{K}(\mu_0,\sigma) + 2\int_0^T \left(\int_{\R^d} |\xi_t|^2 \,d\mu_t - \iint_{\R^d\times\R^d} \langle x-y,G_{\mu_t}(t,x)\rangle\,d\pi_t\right) dt,
	\end{align*}
	for the optimal transference plan $\pi_t\in \Pi_0(\mu_t,\sigma)$. In particular, we obtain
	\[
	\frac{1}{2}\frac{d^+}{dt}\frac{d}{dt}W_2^2(\mu_t,\sigma) = \frac{1}{2}\frac{d^+}{dt}\mathcal{K}(\mu_t,\sigma) \le \int_{\R^d} |\xi_t|^2 \,d\mu_t - \iint_{\R^d\times\R^d} \langle x-y,G_{\mu_t}(t,x)\rangle\,d\pi_t,
	\]
	where $d^+/dt$ denotes the upper derivative in almost every $t>0$.
\end{theorem}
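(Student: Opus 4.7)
My plan is to upgrade the heuristic $\frac{d^2}{dt^2} W_2^2(\mu_t, \sigma)$ calculation preceding the statement---which implicitly assumes the Brenier map $T_t^* \colon \mu_t \to \sigma$ varies smoothly in time---into a rigorous one-sided estimate. The key idea is to sandwich $t \mapsto W_2^2(\mu_t, \sigma)$ between a family of smooth upper envelopes, one for each base time $s$, built from the Lagrangian flow of $\xi$.

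\textbf{Step 1 (Envelope construction).} The local-in-$t$, global-in-$x$ Lipschitz continuity of $\xi$ yields a unique classical Lagrangian flow $X_{s \to t}$ solving $\dot X_{s \to t}(x) = \xi_t(X_{s \to t}(x))$ with $X_{s \to s}(x) = x$. By uniqueness in the continuity equation, $\mu_t = X_{s \to t}\#\mu_s$ for every $s, t \in [0, T]$. For each $s \in [0, T]$, fix an optimal coupling $\pi_s \in \Pi_0(\mu_s, \sigma)$ and set
\[
F_s(t) := \iint_{\R^d \times \R^d} |X_{s \to t}(x) - y|^2 \, d\pi_s(x, y).
\]
Since $(X_{s \to t} \otimes \mathrm{Id})\#\pi_s \in \Pi(\mu_t, \sigma)$ is an admissible (generally suboptimal) coupling, $F_s(t) \geq W_2^2(\mu_t, \sigma)$ for all $t$, with equality at $t = s$.

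\textbf{Step 2 (Derivatives of the envelope).} The $\C^2$ regularity of the flow in $t$ lets me differentiate $F_s$ twice. The chain-rule identity $\frac{d}{dt}[\xi_t(X_{s \to t}(x))] = (\partial_t \xi_t + \xi_t \cdot \nabla \xi_t)(X_{s \to t}(x)) = -G_{\mu_t}(X_{s \to t}(x))$, coming directly from the non-conservative form of the Euler-type equation, yields
\[
F_s''(t) = 2 \iint |\xi_t(X_{s \to t}(x))|^2 \, d\pi_s - 2 \iint \langle X_{s \to t}(x) - y, G_{\mu_t}(X_{s \to t}(x))\rangle \, d\pi_s.
\]
At $t = s$ this becomes $F_s''(s) = 2 \int_{\R^d} |\xi_s|^2 d\mu_s - 2 \iint_{\R^d \times \R^d} \langle x - y, G_{\mu_s}(s, x)\rangle d\pi_s$, exactly twice the integrand on the right-hand side of the theorem. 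Similarly $F_s'(s) = \mathcal{K}(\mu_s, \sigma)$, consistent with Proposition~\ref{prop:wasserstein_t} applied with $\eta \equiv 0$.

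\textbf{Step 3 (Sub-tangent comparison and integration).} The envelope bound $W_2^2(\mu_t, \sigma) \leq F_s(t)$ together with $W_2^2(\mu_s, \sigma) = F_s(s)$ and the Taylor expansion of $F_s$ about $s$ (combined via Fubini) give, for every $0 \leq s \leq s + h \leq T$,
\[
\int_s^{s+h} [\mathcal{K}(\mu_u, \sigma) - \mathcal{K}(\mu_s, \sigma)] \, du \leq \int_s^{s+h} (s + h - v) F_s''(v) \, dv.
\]
Dividing by $h^2/2$ and sending $h \to 0^+$ extracts the pointwise upper-derivative bound $\frac{1}{2} \frac{d^+}{dt} \mathcal{K}(\mu_t, \sigma)\big|_{t=s} \leq \frac{1}{2} F_s''(s)$, which is the ``in particular'' assertion. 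The integral form of the theorem then follows by integrating this pointwise bound over $[0, T]$, legitimate since $s \mapsto F_s''(s) \in L^1([0, T])$ by the $L^2$ assumption on $\xi_t$ and $G_{\mu_t}$, and since $t \mapsto \mathcal{K}(\mu_t, \sigma)$ is continuous.

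\textbf{Main obstacle.} The hard part is Step~3: the map $t \mapsto \mathcal{K}(\mu_t, \sigma)$ admits no a priori pointwise time derivative, because the Brenier map $T_t^*$ carries no intrinsic time regularity. The envelope $F_s$ furnishes a smooth second-order sub-tangent that bypasses this, and the upper derivative $d^+/dt$ combined with the integrated Taylor remainder are the precise devices that compensate for the missing smoothness of $\mathcal{K}$. The $L^2$ assumptions on $\xi$ and $G_\mu$ ensure that all integrals are finite and that the passage from the pointwise upper-derivative bound to the integral inequality is justified via a Dini-type fundamental theorem of calculus.
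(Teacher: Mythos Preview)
Your envelope idea---build $F_s(t)=\iint|X_{s\to t}(x)-y|^2\,d\pi_s$ from the Lagrangian flow, observe $F_s\ge W_2^2(\mu_\cdot,\sigma)$ with contact at $t=s$, and read off $F_s''$ from the Euler equation---is exactly the mechanism the paper uses. The paper packages it as a \emph{discrete} second-order symmetric difference $\Delta_h\mathcal{K}(\mu_{nh},\sigma)=h^{-2}[W_2^2(\mu_{(n+1)h},\sigma)-2W_2^2(\mu_{nh},\sigma)+W_2^2(\mu_{(n-1)h},\sigma)]$, bounds it by the same flow-transported coupling, telescopes over a uniform partition of $[0,T]$, and passes to the limit via Riemann sums to obtain the integral inequality first; the a.e.\ pointwise bound is then a consequence. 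Your continuous presentation is a legitimate and arguably cleaner variant of the same argument.

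There is, however, a genuine gap in your Step~3. From $\int_s^{s+h}[\mathcal{K}(\mu_u,\sigma)-\mathcal{K}(\mu_s,\sigma)]\,du\le\int_s^{s+h}(s+h-v)F_s''(v)\,dv$, dividing by $h^2/2$ and letting $h\to 0^+$ yields a bound on the \emph{Ces\`aro-averaged} increment $\limsup_{h\to 0^+}\tfrac{2}{h^2}\int_s^{s+h}[\mathcal{K}(\mu_u,\sigma)-\mathcal{K}(\mu_s,\sigma)]\,du$, which in general is \emph{not} the upper Dini derivative $D^+\mathcal{K}$ (for instance $\mathcal{K}(s+u)-\mathcal{K}(s)=u\sin(1/u)$ has Ces\`aro limit $0$ but $D^+\mathcal{K}=1$). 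Consequently the ``Dini-type fundamental theorem of calculus'' you invoke to pass from pointwise to integral does not apply as stated. Two easy repairs: (i) combine the forward envelope $F_s$ at $s$ with the backward envelope $F_{s+h}$ at $s+h$ to get $\mathcal{K}(\mu_{s+h},\sigma)-\mathcal{K}(\mu_s,\sigma)\le \tfrac{h}{2}[F_s''(\xi')+F_{s+h}''(\xi)]$ for intermediate points, which \emph{does} bound $D^+\mathcal{K}$ directly; this is essentially the paper's symmetric-difference trick in disguise. Alternatively, (ii) note that the envelope bound $w(t):=W_2^2(\mu_t,\sigma)\le w(s)+\mathcal{K}(\mu_s,\sigma)(t-s)+\tfrac{M}{2}(t-s)^2$ for a local bound $M$ on $F_s''$ forces $w$ to be semiconcave, hence $\mathcal{K}=w'$ is locally BV with nonpositive singular part; then $\mathcal{K}$ is differentiable a.e., your Ces\`aro bound gives $\mathcal{K}'(s)\le F_s''(s)$ at those points, and $\mathcal{K}(T)-\mathcal{K}(0)=\int_0^T d\mathcal{K}\le\int_0^T\mathcal{K}'(s)\,ds\le\int_0^T F_s''(s)\,ds$. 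Either route closes the gap without changing your overall architecture.
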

\begin{proof}
	{\em Step 1:} For some fixed $t\in(0,\infty)$ let
	\begin{align}\label{eq:flow}
	\partial_\tau \Phi_\tau(x) = (\xi_{t+\tau}\circ \Phi_\tau)(x), \quad \Phi_0 = x\qquad \text{for\, $\mu_t$-a.e.~$x$},
	\end{align}
	be the well-defined global in $\tau\in(-t,\infty)$ corresponding Lipschitz flow and set
	\begin{align*}
	\mu_{t + h} = \Phi_{h}\#\mu_{t}, \qquad \mu_{t-h} = \Phi_{-h}\#\mu_{t},
	\end{align*}
	for each $h\in(0,t)$. Furthermore, taking the temporal derivative of \eqref{eq:flow} and using the momentum equation for $\mu_t\xi_t$ provides the representation
	\begin{align}\label{eq:flow_d}
	\partial_\tau^2\Phi_\tau(x) = \big(\partial_t\xi_{t+\tau} + \xi_{t+\tau}\cdot\nabla_x \xi_{t+\tau}\big)\circ \Phi_\tau(x) = -G_{\mu_t}(t+\tau,\Phi_\tau(x))\qquad \text{for\, $\mu_t$-a.e.~$x$},
	\end{align}
	and a.e.~$\tau\in(-t,\infty)$, which will be used in the following steps.
	
	Finally let $\pi_t\in\Pi_0(\mu_t,\sigma)$ be the unique optimal plan between $\mu_t$ and $\sigma\in \P_2(\R^d)$. One clearly sees that $\pi_{t}^\tau := (\Phi_{\tau}\times id)\#\pi_t$ induces a transference plan between $\mu_{t + \tau}$ and $\sigma\in\P_2(\R^d)$.
	
	{\em Step 2:} For some fixed $t\in(0,\infty)$ and $h\in(0, t)$, consider the finite difference
	\begin{align*}
	\Delta_h \mathcal{K}(\mu_t,\sigma) := (D_{\frac{h}{2}}^\text{sym}D_{\frac{h}{2}}^\text{sym}W_2^2)(\mu_t,\sigma),
	\end{align*}
	where $D_\tau^\text{sym}$ denotes the symmetric difference operator with step $\tau>0$, i.e.,
	\[
	(D_\tau^\text{sym}W_2^2)(\mu_t,\sigma) := \frac{1}{2\tau}\lt(W_2^2(\mu_{t + \tau},\sigma) - W_2^2(\mu_{t - \tau},\sigma) \rt).
	\]
	Thus, we explicitly obtain
	\begin{align*}
	\Delta_h \mathcal{K}(\mu_t,\sigma) &= \frac{1}{h^2}\Big( W_2^2(\mu_{t+h},\sigma) - 2 W_2^2(\mu_{t},\sigma) + W_2^2(\mu_{t-h},\sigma)\Big),
	\end{align*}
	i.e., $\Delta_h$ is a second order symmetric difference operator. Notice that, by passing to the limit $h\to 0$ in $\Delta_h\mathcal{K}(\mu_t,\sigma)$, one obtains
	\[
	\lim_{h\to 0}\Delta_h \mathcal{K}(\mu_t,\sigma) = \frac{d^2}{dt^2}W_2^2(\mu_t,\sigma) = \frac{d}{dt}\K(\mu_t,\sigma),
	\]
	whenever the right-hand side is well-defined. Unfortunately, the existence of a second temporal 2-Wasserstein derivative may not be easily justified. Instead, we proceed with the finite difference computations, while mimicking the formal differential computations.
	
	Recall that $\pi_{t}^\tau := (\Phi_{\tau}\times id)\#\pi_t\in \Pi(\mu_{t+\tau},\sigma)$ for any $\tau\in[-h,h]$. Hence, 
	\[
	W_2^2(\mu_{t+\tau},\sigma) \le \iint_{\R^d\times\R^d} |x-y|^2 d\pi_t^\tau = \iint_{\R^d\times\R^d} |\Phi_\tau(x)-y|^2 d\pi_t.
	\]
	Consequently, for any $h\in(0,t)$, we have
	\begin{align*}
	\Delta_h \mathcal{K}(\mu_t,\sigma) &= \frac{1}{h^2} \Big( W_2^2(\mu_{t+h},\sigma) - 2 W_2^2(\mu_{t},\sigma) + W_2^2(\mu_{t-h},\sigma) \Big) \\
	&\le \frac{1}{h^2}\iint_{\R^{d}\times\R^d} |\Phi_h(x)-y|^2 - 2|x-y|^2 + |\Phi_{-h}(x)-y|^2\, d\pi_t = ({\rm I}).
	\end{align*}
	Using the fundamental theorem of calculus and Jensen's inequality, (I) may be reformulated as
	\begin{align*}
	({\rm I}) = &\,\frac{1}{h^2}\iint_{\R^{d}\times\R^d} |\Phi_h(x)-x|^2 + 2\langle \Phi_h(x)-2x+\Phi_{-h}(x),x-y\rangle + |\Phi_{-h}(x)-x|^2\, d\pi_t \\
	\le &\,\frac{1}{h}\int_{-h}^h \int_{\R^d}|\partial_\tau\Phi_\tau(x)|^2 d\mu_t\,d\tau + \frac{1}{h^2}\int_0^h  \iint_{\R^d\times\R^d} (h-\tau)\langle x-y,\pa_\tau^2\Phi_\tau(x)\rangle\,d\pi_t\,d\tau\\
	& + \frac{1}{h^2}\int_{-h}^0 \iint_{\R^d\times\R^d} (h+\tau)\langle x-y, \partial_\tau^2\Phi_\tau(x)\rangle\,d\pi_t\,d\tau\\
	=&\, \int_{-1}^1 \int_{\R^d}|\xi_{t+sh}(x)|^2 d\mu_{t+sh}\,ds - \int_{0}^1 (1-s)\iint_{\R^d\times\R^d} \langle x-y, G_{\mu_{t+sh}}(t+sh,\Phi_{sh}(x))\rangle\,d\pi_t\,ds\\
	& - \int_{-1}^0 (1+s)\iint_{\R^d\times\R^d} \langle x-y, G_{\mu_{t+sh}}(t+sh,\Phi_{sh}(x))\rangle\,d\pi_t\,ds,
	\end{align*}
	where we inserted the representation \eqref{eq:flow_d} in the last equality.
	
	{\em Step 3:} For a fixed $T>0$, we choose $N\in\N$ in such a way that $h=T/N$. Now consider a family $\{\mu_{nh}\}_{n\in I_N}\subset\P_2^{ac}(\R^d)$ for $I_N=\{0,\ldots,N\}\subset \N_0$, defined recursively by $\mu_{(n+1)h} = \Phi_h^n\#\mu_{nh}$ for $n\in I_N$, where $\Phi_h^n$ satisfies
	\[
	\partial_\tau \Phi_\tau^n(x) = (\xi_{nh+\tau}\circ \Phi_\tau^n)(x), \quad \Phi_0^n = x\qquad \text{for\, $\mu_{nh}$-a.e.~$x$}.
	\]
	and $\tau\in(-h,h)$. Then, for each $n\in I_N$, {\em Step 2} provides the inequality
	\begin{align}\label{eq:K_inequality}
	\Delta_h \mathcal{K}(\mu_{nh},\sigma) \le &\,\int_{-1}^1 \int_{\R^d}|\xi_{(n+s)h}(x)|^2 d\mu_{(n+s)h}\,ds \nonumber\\
	&- \int_{0}^1 (1-s)\iint_{\R^d\times\R^d} \langle x-y, G_{\mu_{(n+s)h}}((n+s)h,\Phi_{sh}^n(x))\rangle\,d\pi_{nh}\,ds \nonumber\\
	&- \int_{-1}^0(1+s) \iint_{\R^d\times\R^d} \langle x-y, G_{\mu_{(n+s)h}}((n+s)h,\Phi_{sh}^n(x))\rangle\,d\pi_{nh}\,ds \nonumber\\
	&=: ({\rm A}) + ({\rm B}) + ({\rm C}).
	\end{align}
	Multiplying the inequality with $h$ and summing over $n\in I_N$ yields for the left-hand side
	\[
	\sum_{n=1}^{N-1} h\,\Delta_h \mathcal{K}(\mu_{nh},\sigma) = (D_{\frac{h}{2}}^\text{sym}W_2^2)(\mu_{(N-\frac{1}{2})h},\sigma) - (D_{\frac{h}{2}}^\text{sym}W_2^2)(\mu_{\frac{h}{2}},\sigma).
	\]
	Before proceeding, we first note that the fundamental theorem of calculus and the representation of the first temporal 2-Wasserstein derivative provided in Proposition~\ref{prop:wasserstein_t} yields
	\begin{align*}
	(D_{\frac{h}{2}}^\text{sym}W_2^2)(\mu_{(n+\frac{1}{2})h},\sigma) &= \frac{1}{h}\Big(W_2^2(\mu_{(n+1)h},\sigma) - W_2^2(\mu_{nh},\sigma) \Big) =\frac{1}{h}\int_{nh}^{(n+1)h} \frac{d}{d\tau}W_2^2(\mu_{\tau},\sigma)\,d\tau \\
	&= \frac{2}{h}\int_{nh}^{(n+1)h} \iint_{\R^d\times\R^d}\langle x-y,\xi_{\tau}(x)\rangle\,d\pi_{\tau}\,d\tau \\
	&= 2\int_{0}^{1} \iint_{\R^d\times\R^d}\langle x-y,\xi_{(n+s)h}(x)\rangle\,d\pi_{(n+s)h}\,ds\qquad\text{for any\, $n\in I_N$}.
	\end{align*}
	Therefore, passing to the limit $N\to\infty$ with $T = hN$ gives
	\begin{align*}
	\lim_{N\to \infty}\sum_{n=1}^{N-1} h\,\Delta_h \mathcal{K}(\mu_{nh},\sigma) &= 2\left( \iint_{\R^d\times\R^d}\langle x-y,\xi_{T}(x)\rangle\,d\pi_{T} - \iint_{\R^d\times\R^d}\langle x-y,\xi_{0}(x)\rangle\,d\pi_{0}\right) \\
	&= \mathcal{K}(\mu_{T},\sigma) - \mathcal{K}(\mu_{0},\sigma),
	\end{align*}
	which holds due to Lebesgue's dominated convergence theorem. On the other hand, the following convergences hold for the terms on the right-hand side of \eqref{eq:K_inequality}:
	\begin{gather*}
	\sum_{n=1}^{N-1} h({\rm A}) \longrightarrow 2\int_0^T \int_{\R^d}|\xi_{t}(x)|^2 d\mu_{t}\,dt,\quad \sum_{n=1}^{N-1} h({\rm B}) \longrightarrow \frac{1}{2}\int_0^T \iint_{\R^d\times\R^d} \langle x-y, G_{\mu_t}(t,x)\rangle\,d\pi_{t}\,dt\\
	\sum_{n=1}^{N-1} h({\rm C}) \longrightarrow \frac{3}{2}\int_0^T \iint_{\R^d\times\R^d} \langle x-y, G_{\mu_t}(t,x)\rangle\,d\pi_{t}\,dt.
	\end{gather*}
	These convergences hold simply by definition of Riemann integrable functions and the assumed regularity of $\xi$ and $G_\mu$. 
	Indeed, due to the assumed continuity of $f(t):= \|\xi_t\|_{L^2(\mu_t)}^2$, we know that $f$ is Riemann integrable on $[0,T]$. Therefore, the corresponding upper Darboux sum satisfies
		\[
		 \sum_{n=0}^{N-1} h \sup_{s\in[0,1]}f((n+s)h)\longrightarrow \int_0^T f(t)\,dt\qquad\text{as\; $N\to\infty$}.
		\]
		In particular, we have for any $s\in[0,1]$, that
		\[
		 I_f^{N}(s):=\sum_{n=0}^{N-1} hf((n+s)h) \longrightarrow \int_0^T f(t)\,dt\qquad\text{as\; $N\to\infty$}.
		\]
		Furthermore, we may reformulate the sum to obtain
	\begin{align*}
		\sum_{n=1}^{N-1} h({\rm A}) &= \sum_{n=1}^{N-1} h\int_{-1}^1 \int_{\R^d}|\xi_{(n+s)h}(x)|^2 d\mu_{(n+s)h}\,ds = \int_{-1}^1 \sum_{n=1}^{N-1} hf((n+s)h)\,ds \\
		&= \int_{0}^1 \sum_{n=1}^{N-1} hf((n+s)h)\,ds + \int_{-1}^0 \sum_{n=1}^{N-1} hf((n+s)h)\,ds \\
		&= \int_{0}^1 \sum_{n=1}^{N-1} hf((n+s)h)\,ds + \int_0^1 \sum_{n=0}^{N-2} hf((n+s)h)\,ds \\
		&= 2\int_{0}^1 I_f^{N}(s)\,ds - \int_0^1 hf(sh)\,ds - \int_0^1 h f(T-(1-s)h)\,ds.
	\end{align*}
	It is not hard to see that $|I_f^{N}(s)|\le c$ with some constant $c>0$ for all $N\gg 1$ sufficiently large, and that the two last terms vanish as $N\to\infty$ due to the boundedness of $f$ on $[0,T]$. Therefore, an application of the Lebesgue dominated convergence yields
	\[
	 \lim_{N\to \infty} \sum_{n=1}^{N-1} h({\rm A}) = 2\int_{0}^1 \lim_{N\to\infty}I_f^{N}(s)\,ds = 2\int_{0}^1 \int_0^T f(t)\,dt\,ds = 2\int_0^T f(t)\,dt,
	\]
	as required. The convergence of the sums for $({\rm B})$ and $({\rm C})$ may be shown in a similar fashion.
	
	Collecting all the terms together finally yields the statement. 
\end{proof}

Mimicking the strategy of the proof to Theorem~\ref{thm:wasserstein_tt}, we arrive at the following result.

\begin{theorem}\label{thm:wasserstein_tt_2}
	Let $\mu_t$ and $\nu_t\in\P_2^{ac}(\R^d)$, $t\ge 0$, satisfy Euler type equations of the form
	\begin{align*}\left.
	\begin{aligned}
	\partial_t\mu_t + \nabla\cdot(\mu_t\xi_t) &=0, \\
	\mu_t\big(\partial_t\xi_t + \xi_t\cdot\nabla\xi_t\big) &= -\mu_t G_\mu,
	\end{aligned}\qquad
	\begin{aligned}
	\partial_t\nu_t + \nabla\cdot(\nu_t\eta_t)&=0,\\
	\nu_t\big(\partial_t\eta_t + \eta_t\cdot\nabla\eta_t\big) &= -\nu_t G_\nu,
	\end{aligned}\;\;\right\rbrace\;\; \text{in distribution},
	\end{align*}
	with locally in $t>0$ and globally in $x\in\R^d$ Lipschitz vector fields $x\mapsto\xi_t(x),\,\eta_t(x)$ satisfying
	\begin{align*}\label{eq:Gintegrability}
	t\mapsto\|\xi_t\|_{L^2(\mu_t)},\|\eta_t\|_{L^2(\nu_t)},\|G_{\mu_t}(t,\cdot)\|_{L^2(\mu_t)},\|G_{\nu_t}(t,\cdot)\|_{L^2(\nu_t)}\in\C([0,\infty))\cap L^2([0,\infty)).
	\end{align*}
	Then for any $T>0$ the following inequality holds:
	\begin{align*}
	\mathcal{K}(\mu_T,\nu_T) \le \mathcal{K}(\mu_0,\nu_0) + 2\int_0^T \int_{\R^d\times\R^d} |\xi_t(x)-\eta_t(y)|^2 - \langle x-y,G_{\mu_t}(t,x)-G_{\nu_t}(t,y)\rangle\,d\pi_t\, dt,
	\end{align*}
	for the optimal transference plan $\pi_t\in \Gamma_0(\mu_t,\nu_t)$. In particular, we obtain
	\[
	\frac{1}{2}\frac{d^+}{dt}\mathcal{K}(\mu_t,\nu_t) \le \iint_{\R^d} |\xi_t(x)-\eta_t(y)|^2 d\pi_t - \iint_{\R^d\times\R^d} \langle x-y,G_{\mu_t}(t,x)-G_{\nu_t}(t,y)\rangle\,d\pi_t\,.
	\]
\end{theorem}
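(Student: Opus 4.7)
The plan is to repeat the three-step strategy of the proof of Theorem~\ref{thm:wasserstein_tt} almost verbatim, with the sole modification that the second marginal $\sigma$ is now transported along its own flow generated by $\eta_t$, rather than kept fixed. Concretely, for any fixed $t>0$, I would introduce two local-in-$\tau$ Lipschitz flows
\[
\partial_\tau\Phi_\tau(x)=(\xi_{t+\tau}\circ\Phi_\tau)(x),\qquad \partial_\tau\Psi_\tau(y)=(\eta_{t+\tau}\circ\Psi_\tau)(y),
\]
with $\Phi_0=\mathrm{id}$, $\Psi_0=\mathrm{id}$, so that $\mu_{t+\tau}=\Phi_\tau\#\mu_t$ and $\nu_{t+\tau}=\Psi_\tau\#\nu_t$ by the continuity equations. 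Differentiating once more and inserting the momentum equations gives
\[
\partial_\tau^2\Phi_\tau(x)=-G_{\mu_{t+\tau}}(t+\tau,\Phi_\tau(x)),\qquad \partial_\tau^2\Psi_\tau(y)=-G_{\nu_{t+\tau}}(t+\tau,\Psi_\tau(y)),
\]
which is the analogue of equation~\eqref{eq:flow_d} in the proof of Theorem~\ref{thm:wasserstein_tt}.

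Next, starting from the optimal plan $\pi_t\in\Pi_0(\mu_t,\nu_t)$, the push-forward $\pi_t^\tau:=(\Phi_\tau\times\Psi_\tau)\#\pi_t$ is a (not necessarily optimal) coupling between $\mu_{t+\tau}$ and $\nu_{t+\tau}$, so that for each $\tau\in[-h,h]$
\[
W_2^2(\mu_{t+\tau},\nu_{t+\tau})\le \iint_{\R^d\times\R^d}|\Phi_\tau(x)-\Psi_\tau(y)|^2\,d\pi_t.
\]
Applying the same second-order symmetric difference operator $\Delta_h$ as in \emph{Step~2} and using the fundamental theorem of calculus twice, I would obtain, in parallel with the one-marginal case,
\begin{align*}
\Delta_h\mathcal{K}(\mu_t,\nu_t)&\le \frac{1}{h}\int_{-h}^{h}\iint |\partial_\tau(\Phi_\tau(x)-\Psi_\tau(y))|^2\,d\pi_t\,d\tau \\
&\quad -\frac{1}{h^2}\int_{-h}^{h}(h-|\tau|)\iint \langle x-y,\,G_{\mu_{t+\tau}}\circ\Phi_\tau(x)-G_{\nu_{t+\tau}}\circ\Psi_\tau(y)\rangle\,d\pi_t\,d\tau,
\end{align*}
after which the velocity term becomes $|\xi_{t+\tau}\circ\Phi_\tau(x)-\eta_{t+\tau}\circ\Psi_\tau(y)|^2$ by definition of the flows. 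The only real book-keeping issue is that $(\Phi_\tau\times\Psi_\tau)\#\pi_t$ is in general not optimal for $\tau\ne 0$, but that is precisely why the inequality, rather than equality, is the correct statement.

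Finally, I would replicate \emph{Step~3}: partition $[0,T]$ into $N$ pieces of length $h=T/N$ with freshly restarted flows $\Phi^n,\Psi^n$ on each interval, multiply the discrete estimate at each step by $h$, and telescope. The left-hand side telescopes into $(D^{\mathrm{sym}}_{h/2}W_2^2)(\mu_{(N-1/2)h},\nu_{(N-1/2)h})-(D^{\mathrm{sym}}_{h/2}W_2^2)(\mu_{h/2},\nu_{h/2})$, which, by Proposition~\ref{prop:wasserstein_t} applied to both evolving measures, converges to $\mathcal{K}(\mu_T,\nu_T)-\mathcal{K}(\mu_0,\nu_0)$ as $N\to\infty$. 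The right-hand Riemann sums converge to the claimed integrals by the same Darboux-sum/dominated convergence argument as before, using the assumed $\C([0,\infty))\cap L^2([0,\infty))$ regularity of $t\mapsto\|\xi_t\|_{L^2(\mu_t)}$, $\|\eta_t\|_{L^2(\nu_t)}$, $\|G_{\mu_t}\|_{L^2(\mu_t)}$, $\|G_{\nu_t}\|_{L^2(\nu_t)}$.

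The main technical obstacle is ensuring that $(\Phi_\tau\times\Psi_\tau)\#\pi_t$ actually provides a sharp enough upper bound when passed through the second-order symmetric difference—this is handled because the errors from non-optimality of $\pi_t^\tau$ vanish at $\tau=0$ to first order (where $\pi_t^0=\pi_t$ is optimal), so they do not contribute to the limit. Beyond that, the proof is essentially Theorem~\ref{thm:wasserstein_tt} carried out symmetrically in both variables.
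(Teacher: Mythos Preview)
Your proposal is correct and is exactly the argument the paper intends: the paper does not give a separate proof but simply states that the result follows by ``mimicking the strategy of the proof to Theorem~\ref{thm:wasserstein_tt}'', which is precisely what you have outlined---introducing the second Lipschitz flow $\Psi_\tau$ for $\nu_t$, replacing the competitor plan $(\Phi_\tau\times\mathrm{id})\#\pi_t$ by $(\Phi_\tau\times\Psi_\tau)\#\pi_t$, and rerunning Steps~1--3 symmetrically in both variables. The one point worth flagging is that in the two-flow version of the kinetic term~(A) the integrand $\iint|\xi_{(n+s)h}\circ\Phi_{sh}^n-\eta_{(n+s)h}\circ\Psi_{sh}^n|^2\,d\pi_{nh}$ is no longer a function of $(n+s)h$ alone, so the Darboux-sum argument needs the obvious bound $|\xi-\eta|^2\le 2|\xi|^2+2|\eta|^2$ (which does reduce to functions of $(n+s)h$) plus continuity of $t\mapsto\pi_t$; this is the same level of detail the paper already glosses over for the (B) and (C) terms in Theorem~\ref{thm:wasserstein_tt}.
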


A direct consequence of Theorem~\ref{thm:wasserstein_tt_2} is the following result.

\begin{corollary}\label{cor:wasserstein_tt}
Let $(\rho,u)$ be an energy decaying solution of the Euler equations \eqref{eq:euler} with $\mu$, with density $\rho$, and $u$ satisfying additionally the assumptions of Theorem~\ref{thm:wasserstein_tt}. Furthermore, let $\nu$, with density $\omega$, satisfy $\nabla p(\omega) + \omega(\nabla V + \nabla \mw \star \nu) = 0$ $dx$-almost everywhere. Suppose that $p'(\omega)\in L^2(\nu)$ and $p'(\rho_t) \in L^2(\mu_t)$ for almost every $t\in(0,\infty)$. Then,  the following inequality 
\[
 \frac{1}{2}\frac{d^+}{dt}\frac{d}{dt}W_2^2(\mu_t,\nu) \le \|\xi_t\|^2_{L^2(\mu_t)} - \frac{\gamma}{2}\frac{d}{dt}W_2^2(\mu_t,\nu) - J_V(\mu_t|\nu) - J_\mw(\mu_t|\nu),
\]
holds for almost every $t>0$. Here, the functionals $J_V$ and $J_W$ are defined by
\begin{align*}
J_V(\mu_t|\nu)& := \int_{\R^d} \lal y-T_t(y), \nabla V(y)-\nabla V(T_t(y))\ral \,d\nu(y),\cr
J_\mw(\mu_t|\nu) &:= \frac12\iint_{\R^d\times\R^d} \lal (y-\hat y) - T_t(y) - T_t(\hat y), \cr
& \hspace*{8em} \nabla \mw(y-\hat y) - \nabla \mw\lt(T_t(y) - T_t(\hat y)\rt) \ral\, d\nu(y) d\nu(\hat y),
\end{align*}
where $T_t$ is the unique transport map satisfying $T_t \# \nu = \mu_t$.
\end{corollary}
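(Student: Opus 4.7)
The plan is to specialize Theorem~\ref{thm:wasserstein_tt_2} to the case where the second measure is the stationary $\nu$, and then identify each term of the resulting bound. I would take $\nu_t \equiv \nu$ with velocity $\eta_t \equiv 0$; both the continuity and momentum equations are then satisfied with $G_{\nu_t} \equiv 0$ $\nu$-a.e., which is precisely the content of the assumed stationarity condition $\nabla U'(\omega) + \nabla V + \nabla \mw \star \nu = 0$ $\nu$-a.e.\ (equivalent to $\nabla p(\omega)+\omega(\nabla V+\nabla\mw\star\nu)=0$ via the identity $\omega\nabla U'(\omega)=\nabla p(\omega)$, and given a pointwise meaning by $p'(\omega)\in L^2(\nu)$). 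Applied to $(\mu,\xi)=(\mu_t,u_t)$ with
\[
G_{\mu_t}(t,x) = \nabla U'(\rho_t)(x) + \nabla V(x) + (\nabla\mw\star\mu_t)(x) + \gamma u_t(x),
\]
Theorem~\ref{thm:wasserstein_tt_2} produces the preliminary bound
\[
\tfrac{1}{2}\tfrac{d^+}{dt}\tfrac{d}{dt}W_2^2(\mu_t,\nu) \le \|u_t\|_{L^2(\mu_t)}^2 - \iint_{\R^d\times\R^d}\lal x-y,G_{\mu_t}(t,x)\ral\,d\pi_t,
\]
where $\pi_t=(T_t\times\mathrm{id})\#\nu\in\Pi_0(\mu_t,\nu)$.

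Next I would insert $\tilde G_\nu(y):=\nabla U'(\omega)(y)+\nabla V(y)+(\nabla\mw\star\nu)(y)$ inside the bracket; by the stationarity condition this quantity is zero $\nu$-a.e., so the integral is unchanged. The bracket then splits into four natural pieces, each of which I would evaluate via the change of variables induced by $T_t\#\nu=\mu_t$. The damping piece $\gamma\int\lal T_t(y)-y,u_t(T_t(y))\ral\,d\nu(y)$ equals $\tfrac{\gamma}{2}\tfrac{d}{dt}W_2^2(\mu_t,\nu)$ by the first-derivative formula in Proposition~\ref{prop:wasserstein_t}. The confinement piece coincides by definition with $J_V(\mu_t|\nu)$. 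For the interaction piece, I would expand $(\nabla\mw\star\mu_t)(T_t(y))=\int\nabla\mw(T_t(y)-T_t(\hat y))\,d\nu(\hat y)$ and symmetrize in the swap $y\leftrightarrow\hat y$ using $\mw(-x)=\mw(x)$ (so that $\nabla\mw$ is odd); this collapses the piece to exactly $J_\mw(\mu_t|\nu)$.

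The last piece is the internal-energy contribution
\[
\int_{\R^d}\lal T_t(y)-y,\nabla U'(\rho_t)(T_t(y)) - \nabla U'(\omega)(y)\ral\,d\nu(y),
\]
which I would discard by showing it to be nonnegative. This is the point at which assumption {\bf (H1)} enters: by McCann's displacement convexity of $\mu\mapsto\int U(\rho)\,dx$, the derivative of this functional along the geodesic from $\nu$ to $\mu_t$ gives
\[
\int U(\rho_t)\,dx-\int U(\omega)\,dx\ge\int_{\R^d}\lal T_t(y)-y,\nabla U'(\omega)(y)\ral\,d\nu(y),
\]
while applying the same convexity along the reverse geodesic (from $\mu_t$ to $\nu$, with optimal map $T_t^{-1}$) yields the symmetric inequality. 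Adding the two bounds produces the required nonnegativity. Dropping this contribution and reassembling the identifications from the previous paragraph delivers the stated inequality.

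The hard part will be this last step: securing enough regularity on $T_t$, $\rho_t$, and $\omega$ to apply McCann's identity and its reverse rigorously, so that the pointwise-in-$t$ nonnegativity of the internal-energy piece can be extracted. The hypotheses $p'(\omega)\in L^2(\nu)$ and $p'(\rho_t)\in L^2(\mu_t)$ are included precisely to legitimize the integrations by parts that convert the internal-energy directional derivatives into the form $\int\lal T-\mathrm{id},\nabla U'\ral\,d\nu$ used above. Beyond this analytic input, the argument is essentially bookkeeping built on Proposition~\ref{prop:wasserstein_t}, the push-forward $T_t\#\nu=\mu_t$, and the symmetry of $\mw$.
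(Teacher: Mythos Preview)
Your proposal is correct and follows essentially the same route as the paper: specialize Theorem~\ref{thm:wasserstein_tt_2} to $(\nu_t,\eta_t)=(\nu,0)$, add back the $\nu$-a.e.\ zero term $\nabla U'(\omega)+\nabla V+\nabla W\star\nu$, and then identify the damping, $J_V$, and $J_W$ pieces exactly as you describe. The only cosmetic difference is in the internal-energy step: you invoke the above-tangent inequality for McCann's displacement convex functional from both endpoints and add, whereas the paper carries out that same inequality explicitly as a ``weak'' integration by parts with cutoff functions (using $p'(\omega)\in L^2(\nu)$, $p'(\rho_t)\in L^2(\mu_t)$ to control the boundary terms) to reach $\int\big((\tilde\nabla\!\cdot T_t)+(\tilde\nabla\!\cdot T_t^*)\circ T_t-2d\big)p(\omega)\,dy\ge 0$; these are the same argument in different packaging, and you correctly flag this as the analytic crux.
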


\begin{proof}
 We make use of Theorem~\ref{thm:wasserstein_tt} with $(\nu_t,\eta_t)=(\nu,0)$ and
 \[
  G_\mu=\nabla \big( U^\prime(\rho) + V + W\star\mu\big) + \gamma \xi,\qquad G_\nu = \nabla \big( U^\prime(\omega) + V + W\star\nu\big).
 \]
 In fact, we have that $\omega\, G_\nu\equiv 0$ a.e. Indeed, by construction we find
 \[
  \omega\,G_\nu = \omega\nabla \big( U^\prime(\omega) + V + W\star\nu\big) = \nabla p(\omega) + \omega\nabla \big(V + W\star\nu\big) = 0\qquad \text{a.e.}
 \]
 We begin by computing the term
 \begin{align*}
   \int_{\R^d} \langle y-T_t(y),G_{\nu_t}(t,y)\rangle\,d\nu &= \int_{\R^d} \langle y-T_t(y),\nabla p(\omega)\rangle\,dy +\int_{\R^d} \langle y-T_t(y),\nabla \big(V + W\star\nu\big)\rangle\,d\nu,
  \end{align*} 
 where we used the fact that $\omega\nabla U^\prime(\omega)=\nabla p(\omega)$. Similarly, we obtain
 \begin{align*}
   \int_{\R^d} \langle y-T_t(y),G_{\mu_t}(t,T_t(y))\rangle\,d\nu &= \int_{\R^d} \langle T_t^*(x)-x,\nabla p(\rho_t)\rangle\,dx \\
   &\hspace*{-6em}+ \gamma \int_{\R^d} \langle T_t^*(x)-x,\xi_t(x)\rangle\,d\mu_t +\int_{\R^d} \langle T_t^*(x)-x,\nabla \big(V + W\star\mu_t\big)\rangle\,d\mu_t.
 \end{align*}
  Subtracting the equation above from the previous one, we get
  \begin{align*}
   &\int_{\R^d} \langle y-T_t(y),G_{\nu_t}(t,y)-G_{\mu_t}(t,T_t(y))\rangle\,d\nu \\
   &\hspace*{5em}= \int_{\R^d} \langle y-T_t(y),\nabla p(\omega)\rangle\,dy - \int_{\R^d} \langle T_t^*(x)-x,\nabla p(\rho_t)\rangle\,dx + \frac{\gamma}{2}\frac{d}{dt}W_2^2(\mu_t,\nu) \\
   &\hspace*{8em}+ J_V(\mu_t|\nu) + \int_{\R^d} \langle y-T_t(y),(\nabla W\star\nu)(y)-(\nabla W\star\mu_t)(T_t(y))\rangle\,d\nu\\
   &\hspace*{5em}= I_1 + \frac{\gamma}{2}\frac{d}{dt}W_2^2(\mu_t,\nu) + J_V(\mu_t|\nu) + I_2.
  \end{align*}
 In order to deal with $I_1$, we proceed by a ``weak" integration by parts as in \cite{lisini2009nonlinear,bolley2012convergence,carrillo2006contractions}. We give some details on this. We consider a smooth cut-off function $\chi_R\in\C_0^\infty(\R^d)$ satisfying the properties
  \[
   0\le \chi_R\le 1,\quad |\nabla\chi_R|\le \frac{C}{R},\quad \chi_R\equiv 1\;\;\text{on\, $B_R(0)$},\quad \chi_R\equiv 0\;\;\text{on\, $\R^d\setminus B_{2R}(0)$}.
  \]
Under the assumption $p'(\rho_t)\in L^2(\mu_t)$, we obtain from {\bf (H1)}
  \begin{align*}
  \int_{\R^d} p(\rho_t)\,dx &\le \frac{d}{d-1}\int_{\R^d} \rho_t\, p'(\rho_t)\,dx \le \frac{d}{d-1} \left(\int_{\R^d}|p'(\rho_t)|^2 d\mu_t\right)^{1/2}, \\
  \int_{\R^d} |T_t^*-id|\,p(\rho_t)\,dx &\le \frac{d}{d-1}\int_{\R^d} |T_t^*-id|\,p'(\rho_t)\,d\mu_t \le \frac{d}{d-1} W_2(\mu_t,\nu)\left(\int_{\R^d} |p'(\rho_t)|^2d\mu_t\right)^{1/2},
  \end{align*}
and therefore, $p(\rho_t)$ and $ |T_t^*-id|\,p(\rho_t)$ are in $L^1(\R^d)$ a.e. $t\in(0,\infty)$. On the other hand, the assumption on $G_\mu(t,\cdot)$ provides
  \[
  \int_{\R^d} |\nabla p(\rho_t)|\,dx = \int_{\R^d} |\nabla U'(\rho_t)|\,d\mu_t\le \|\nabla U'(\rho_t)\|_{L^2(\mu_t)}\qquad\text{a.e. $t\in(0,\infty)$},
  \]
and therefore, $p(\rho_t)\in W^{1,1}(\R^d)$ a.e. $t\in(0,\infty)$.

Since $\chi_R(T_t^*-id)\in L^\infty(\R^d)\cap BV(\R^d)$, we perform integration by parts to obtain
  \begin{align}\label{eq:strong_int}
  	\begin{aligned}
  		-\int_{\R^d} \chi_R\langle T_t^*-id,\nabla p(\rho_t)\rangle\,dx &\ge \int_{\R^d} \tilde\nabla\cdot\big(\chi_R\,(T_t^*-id)\big)\,p(\rho_t)\,dx \\
  		&= \int_{\R^d} \chi_R\,(\tilde\nabla \cdot T_t^*)\,p(\rho_t)\,dx - d\int_{\R^d} \chi_R\,p(\rho_t)\,dx \\
  		&\hspace*{8.5em}+ \int_{\R^d} \nabla\chi_R\cdot (T_t^*-id)\,p(\rho_t)\,dx,
  	\end{aligned}
  \end{align}
where we used the fact that the distributional trace of the jacobian $\nabla\cdot T_t^*\ge 0$ is a nonnegative measure, and $\tilde\nabla \cdot $ represents the $dx$-absolutely continuous part of $\nabla\cdot $, defined in the Alexandrov almost everywhere sense, see \cite{mccann1997convexity} for details.
  
 Notice that by construction, the following convergences hold:
  \begin{align*}
	\left.\begin{aligned}
		\chi_R\langle T_t^*-id,\nabla p(\rho_t)\rangle &\to \langle T_t^*-id,\nabla p(\rho_t)\rangle \\
		\chi_R\,(\tilde\nabla \cdot T_t^*)\,p(\rho_t) &\to (\tilde\nabla\cdot T_t^*)\,p(\rho_t) \\
		\chi_R\,p(\rho_t) &\to p(\rho_t)
	\end{aligned}\;\;\right\}\;\; \text{as $R\to\infty$},
  \end{align*}
  for almost every $t\in(0,\infty)$. Furthermore, we know that $\tilde{\nabla}\cdot T_t^* \ge 0$, and that
  \begin{align*}
	\left.\begin{aligned}
		\|\chi_R\langle T_t^*-id,\nabla p(\rho_t)\rangle\|_{L^1(\R^d)} &\le \|\langle T_t^*-id,\nabla p(\rho_t)\rangle\|_{L^1(\R^d)} \\
		\|\chi_R\,p(\rho_t)\|_{L^1(\R^d)} &\le \|p(\rho_t)\|_{L^1(\R^d)}
	\end{aligned}\;\;\right\}\;\; \text{for all $R>0$}.
  \end{align*}
For the first bound, we used that $\nabla p(\rho_t)=\rho_t \nabla U'(\rho_t)$ to get
$$
\|\langle T_t^*-id,\nabla p(\rho_t)\rangle\|_{L^1(\R^d)} \leq W_2(\mu_t,\nu) \left(\int_{\R^d} |\nabla U'(\rho_t)|^2d\mu_t\right)^{1/2}\,.
$$
In particular, using the Lebesgue dominated convergence, we deduce
  \begin{align*}
  	\left.\begin{aligned}
	  	\int_{\R^d} \chi_R\langle T_t^*-id,\nabla p(\rho_t)\rangle\,dx &\to \int_{\R^d} \langle T_t^*-id,\nabla p(\rho_t)\rangle\,dx \\
	  	\int_{\R^d} \chi_R\,p(\rho_t)\,dx &\to \int_{\R^d} p(\rho_t)\,dx \\
	  	\int_{\R^d} \nabla\chi_R\cdot (T_t^*-id)\,p(\rho_t)\,dx &\to 0
  	\end{aligned}\;\;\right\}\;\; \text{as $R\to\infty$}.
  \end{align*}
As for the other term, we obtain from Fatou's lemma
  \[
   \int_{\R^d} (\tilde\nabla\cdot T_t^*)\,p(\rho_t)\,dx \le \liminf_{R\to\infty} \int_{\R^d} \chi_R\,(\tilde\nabla\cdot T_t^*)\,p(\rho_t)\,dx.
  \]
  Consequently, we pass to the limit as $R\to\infty$ in \eqref{eq:strong_int} to obtain the ``weak" integration by parts formula
  \begin{align*}
   -\int_{\R^d} \langle T_t^*-id,\nabla p(\rho_t)\rangle\,dx \ge \int_{\R^d} (\tilde\nabla \cdot T_t^* - d)\,p(\rho_t)\,dx,
  \end{align*}
  for almost every $t\in(0,\infty)$. Using the same arguments, we can perform a ``weak" integration by parts also for the other term in $I_1$, thereby obtaining
  \begin{align}\label{eq:weak_int}
  \begin{aligned}
  I_1 &\geq -\int_{\R^d} \big(d-(\tilde\nabla \cdot T_t)(y)\big)\,p(\omega)\,dy + \int_{\R^d} \big((\tilde\nabla \cdot T_t^*)(x)-d\big)\,p(\rho_t)\,dx \\
  &= \int_{\R^d} \Big((\tilde\nabla\cdot T_t)(y) + (\tilde\nabla\cdot T_t^*)(T_t(y))- 2d\Big)p(\omega)\,dy \ge 0,
  \end{aligned}
  \end{align}
where the last inequality follow from \cite{carrillo2006contractions,lisini2009nonlinear,bolley2012convergence}, where similar arguments were used. Finally, using the fact that $\nabla \mw(-x) = - \nabla \mw(x)$ for $x \in \R^d$, we can rewrite $I_2$ as 
  \begin{align*}
  I_2 &= \frac12\iint_{\R^d\times\R^d} \lal y - T_t(y) + T_t(\hat y) - \hat y, \nabla \mw(y-\hat y) - \nabla \mw(T_t(y) - T_t(\hat y)) \ral\, d\nu(y)d\nu(\hat y)\cr
  &= J_W(\mu_t|\nu).
  \end{align*}
  Putting all the terms together and invoking Theorem~\ref{thm:wasserstein_tt_2} concludes the proof.
\end{proof}

\begin{remark}
	In the isothermal case with $U(r)=r\log(r)$, $p'(r) = rU''(r) = 1$, and hence the assumption $p'(\rho_t)\in L^2(\mu_t)$ is trivially satisfied. As for the isentropic case, additional regularity is required. 
\end{remark}

A simple outcome of {\bf (H2)} is the following result which follows from direct computations using the convexity assumptions on the potentials, see \cite{carrillo2003kinetic} for more details.

\begin{proposition}\label{prop:J_dissipation}
 Under condition {\bf (H2)} for the potentials $V$ and $W$, we have that $J_V(\mu_t|\mu_\infty)$ and $J_W(\mu_t|\mu_\infty)$ defined in Corollary~\ref{cor:wasserstein_tt} are bounded from below, where $\mu_\infty\in\P_2^{ac}(\R^d)$ is a minimizer of the free energy $\F$ provided in Proposition~\ref{prop:stationary}. In particular,
 \begin{align*}
  J_V(\mu_t|\mu_\infty) &\ge c_V W_2^2(\mu_t,\mu_\infty),\\
  J_\mw(\mu_t|\mu_\infty) &\ge c_W W_2^2(\mu_t,\mu_\infty) - c_W\lt| \int_{\R^d} x\, d\mu_\infty - \int_{\R^d} x\, d\mu_t \rt|^2,
 \end{align*}
 where $c_V$ and $c_W$ are given in {\bf (H2)}.
\end{proposition}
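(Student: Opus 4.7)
The plan is to apply the $\lambda$-convexity estimates in \textbf{(H2)} pointwise to the integrands of $J_V$ and $J_W$, and then to integrate and expand.

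For the bound on $J_V(\mu_t|\mu_\infty)$, I would simply apply \textbf{(H2)} with the choice $x=y$ and $y' = T_t(y)$ inside the integrand, obtaining
\[
\lal y - T_t(y),\nabla V(y) - \nabla V(T_t(y))\ral \ge c_V|y - T_t(y)|^2\qquad\mu_\infty\text{-a.e.}
\]
Integrating against $\mu_\infty$ and using that $T_t\#\mu_\infty = \mu_t$ is an optimal transport map, so $\int_{\R^d}|y-T_t(y)|^2\,d\mu_\infty = W_2^2(\mu_t,\mu_\infty)$, yields the first inequality directly.

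For the bound on $J_W(\mu_t|\mu_\infty)$, I would first rewrite the integrand using the identity
\[
(y-\hat y) - (T_t(y)-T_t(\hat y)) = (y - T_t(y)) - (\hat y - T_t(\hat y)),
\]
and then apply \textbf{(H2)} to the interaction term with arguments $y-\hat y$ and $T_t(y)-T_t(\hat y)$, to get
\[
\lal (y-\hat y)-(T_t(y)-T_t(\hat y)),\nabla W(y-\hat y)-\nabla W(T_t(y)-T_t(\hat y))\ral \ge c_W\bigl|(y-T_t(y)) - (\hat y - T_t(\hat y))\bigr|^2.
\]
Expanding the square and integrating against $d\mu_\infty(y)\,d\mu_\infty(\hat y)$, the diagonal terms each contribute $W_2^2(\mu_t,\mu_\infty)$ while the cross term produces
\[
-2\left|\int_{\R^d}(y-T_t(y))\,d\mu_\infty(y)\right|^2 = -2\left|\int_{\R^d} y\,d\mu_\infty - \int_{\R^d} x\,d\mu_t\right|^2,
\]
where I used the pushforward property $T_t\#\mu_\infty = \mu_t$ to rewrite the second term. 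The factor $\tfrac12$ in front of $J_W$ then yields the claimed bound.

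There is no real obstacle here beyond bookkeeping; the only point worth verifying carefully is the algebraic identity that rewrites the $J_W$ integrand as a difference of displacements, which is what allows the quadratic form to decouple into the $W_2^2$ term minus the squared mean-displacement correction. The symmetry $W(-x)=W(x)$ assumed in \textbf{(H2)} (hence $\nabla W$ odd) is implicitly used to symmetrize the integrand in the original definition of $J_W$, but after the application of convexity it plays no further role.
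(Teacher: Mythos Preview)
Your argument is correct and is precisely the ``direct computation using the convexity assumptions on the potentials'' that the paper invokes (it gives no detailed proof and refers to \cite{carrillo2003kinetic}). The pointwise application of \textbf{(H2)} followed by expanding the square and identifying the mean-displacement term via $T_t\#\mu_\infty=\mu_t$ is exactly the intended route.
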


We now have the essential ingredients to construct a Lyapunov functional for establishing the convergence to equilibrium of energy decaying solutions to the damped Euler equations \eqref{eq:euler} in the space $\P_2(\R^d)$ endowed with the 2-Wasserstein distance.

%
%

\section{Equilibration in Wasserstein distance}\label{sec:equilibration}

We begin this section by introducing the functionals involved and discuss their properties. The idea behind lies in the fact that the second temporal derivative of the Wasserstein distance produces a term on the right-hand side which gives a term that dissipates the Wasserstein distance itself (cf.~Proposition~\ref{prop:J_dissipation}). For this reason, we will have to include the term $dW_2^2/dt$ into the Lyapunov functional. In addition to the free energy $\F$, we consider the functionals
\begin{align*}
 \E(\mu_t,\mu_\infty) &:= W_2^2(\mu_t,\mu_\infty) + \int_{\R^d} |u_t|^2d\mu_t, \\
 \J(\mu_t,\mu_\infty)  &:= \alpha W_2^2(\mu_t,\mu_\infty) + \frac{d}{dt}W_2^2(\mu_t,\mu_\infty) + \beta\int_{\R^d} |u_t|^2d\mu_t,
\end{align*}
with constants $\alpha,\beta>0$. Since the $dW_2^2/dt$ term can be bounded from above by
\[
 \frac{1}{2}\left|\frac{d}{dt}W_2^2(\mu_t,\mu_\infty)\right| \le \int_{\R^d} |\langle y-T_t(y),u_t(T_t(y))\rangle|\,d\mu_\infty \le W_2(\mu_t,\mu_\infty)\lt(\int_{\R^d} |u_t|^2d\mu_t\rt)^{1/2},
\]
for all $t\ge 0$, where $T_t\#\mu_\infty=\mu_t$, we conclude that $\E$ and $\J$ are equivalent in the following sense:
\begin{align}\label{eq:equivalence}
 p\,\E \le \J \le q\,\E,
\end{align}
for constants $p,q>0$, depending only on $\alpha$ and $\beta$ whenever $\alpha\beta>1$.

\begin{remark}\label{rmk5}
 If $(\rho,u)$ is an energy decaying solution of the damped Euler equations \eqref{eq:euler}, then we obtain the uniform (in $t$) boundedness of $\|u_t\|_{L^2(\mu_t)}$ from the energy estimate \eqref{eq:energy_estimate}. Therefore, if $W_2(\mu_t,\mu_\infty)$ is also uniformly bounded in time, then 
 \[
  \sup\nolimits_{t\ge 0}\left|\frac{d}{dt}W_2^2(\mu_t,\mu_\infty)\right| \le M,
 \]
 for some constant $M<\infty$, which asserts that $t\mapsto W_2^2(\rho_t,\mu_\infty)$ is uniformly continuous.
\end{remark}

In order to provide the equilibration also for the velocity field $u$, we impose additional assumptions on the free energy $\F$:
\begin{enumerate}
 \item[{\bf (H3)}] The free energy $\F$ satisfies the stability estimate
 \begin{align}\label{eq:HWI}
  \F(\mu_t) - \F(\mu_\infty) \le c_\F(\mu_t) W_2(\mu_t,\mu_\infty),
 \end{align}
 for some time dependent function $c_\F(\mu_t)>0$ satisfying additionally
 \begin{align}\label{eq:regularity}
  \lim_{t\to \infty}\frac{1}{1+t}\int_0^t \frac{c_\F^2(\mu_s)\,}{1+s}\,ds = 0.
 \end{align}
\end{enumerate}

\begin{remark}
	A well-known inequality which takes the form \eqref{eq:HWI} is the so-called HWI inequality \cite{carrillo2003kinetic,carrillo2006contractions,villani2008optimal}:
	\[
	\F(\mu_t) - \F(\mu_\infty) \le \|\nabla(\delta_\mu\F)(\mu_t) \|_{L^2(\mu_t)}^2 W_2(\mu_t,\mu_\infty) - (\lambda/2)W_2^2(\mu_t,\mu_\infty),
	\]
	for some $\lambda \ge  0$. A classical example of a free energy satisfying the HWI inequality is given by
 \[
  \F(\rho) = \int_{\R^d} \rho\,\log \rho\,dx + \int_{\R^d} V(x)\rho\,dx,
 \]
where $V$ is a smooth convex potential such that $\int_{\R^d}\exp(-V(x))\,dx=1$. Then the corresponding stationary state is simply $\rho_\infty=e^{-V}$. Furthermore, since $\F(\rho_\infty)=0$, we have 
 \[
  0\le \F(\rho) - \F(\rho_\infty) = \int_{\R^d} \rho \,\log (\rho e^V)\,dx.
 \]
 Then from the standard HWI inequality \cite{villani2008optimal}, we obtain \eqref{eq:HWI} with $c_\F^2(\rho) = \int_{\mathbb{R}^d} |\nabla(\log \rho + V)|^2 \,d\rho$.
\end{remark}

\begin{remark}
\begin{enumerate}
 \item[(i)] Observe that {\bf (H3)} is also an assumption on the regularity of solutions to the damped Euler equations \eqref{eq:euler}. 
 
  \item[(ii)] A sufficient condition for \eqref{eq:regularity} includes the case $c_\F(\mu_t)\le c_\infty$ uniformly in time. Indeed,
  \[
   \frac{1}{1+t}\int_0^t \frac{c_\infty}{1+s}\,ds = c_\infty\,(1+t)^{-1}\ln(1+t) \longrightarrow 0\qquad\text{for\; $t\to\infty$}.
  \]
\end{enumerate}
\end{remark}

In view of condition {\bf (H2)}, we will study the equilibration for two separate cases.

\subsection{The case with confinement} 
Here, we consider the case where the confinement potential $V$ is present and satisfies condition {\bf (H2)}, as well as the interaction potential $W$. In this case, Proposition~\ref{prop:stationary} provides a stationary measure $\mu_\infty$, with density $\rho_\infty$, which satisfies 
\[
 \nabla p(\rho_\infty) + \rho_\infty\nabla\big( V + W\star\rho_\infty\big) = 0\qquad \text{a.e.}
\]
Hence, Theorem~1, with $\sigma=\mu_\infty$, and Proposition~\ref{prop:J_dissipation} holds true with
\[
 J_V(\mu_t|\mu_\infty) + J_W(\mu_t|\mu_\infty) \ge c_\ell W_2^2(\mu_t,\mu_\infty),\qquad c_\ell=\begin{cases}
  c_V & \text{for\; $c_W\ge 0$} \\
  c_V + c_W & \text{for\; $c_W< 0$}
 \end{cases}.
\]
Note that in the case $c_W\ge 0$, we have that $J_W(\mu_t|\mu_\infty)\ge 0$ due to Jensen's inequality.

\begin{theorem}\label{thm:confinement}
 Let $(\rho,u)$ be an energy decaying solution to the Euler equations \eqref{eq:euler}, with $\mu_t\in\P_2^{ac}(\R^d)$ the measure whose density is $\rho_t$ for all $t\geq0$, and $u$ satisfying additionally the assumptions of Theorem~\ref{thm:wasserstein_tt}, and $U$, $V \not\equiv 0$ and $W$ satisfying conditions {\bf (H1)}-{\bf (H2)}. Furthermore, assume that the initial data satisfies
 $
  \F(\mu_0) + W_2(\mu_0,\mu_\infty) <\infty\,,
 $
then 
 \[
  \lim_{t\to\infty} W_2(\mu_t,\mu_\infty) =0\,.
 \]
\end{theorem}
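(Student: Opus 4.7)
The plan is to construct a strict Lyapunov functional that combines the 2-Wasserstein distance, its first temporal derivative, and a constant multiple of the total energy $\H(\rho_t,u_t)=\F(\rho_t)+\tfrac12\int|u_t|^2\,d\mu_t$. Fix a parameter $\beta>2/\gamma$ and define
\[
 \mathcal{L}(t):=\gamma\,W_2^2(\mu_t,\mu_\infty)+\frac{d}{dt}W_2^2(\mu_t,\mu_\infty)+\beta\bigl[\H(\rho_t,u_t)-\F(\rho_\infty)\bigr].
\]
The choice $\alpha=\gamma$ is dictated by the structure of Corollary~\ref{cor:wasserstein_tt}: it cancels exactly the cross term $(\gamma/2)(d/dt)W_2^2$ appearing in the second-derivative estimate, mirroring the finite-dimensional computation described in Section~\ref{sec:intro}. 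The condition $\beta>2/\gamma$ simultaneously ensures $\alpha\beta>1$ (compatible with the equivalence in \eqref{eq:equivalence}) and absorbs the kinetic contribution when the energy identity \eqref{eq:energy_estimate} is invoked. Finiteness of $\mathcal{L}(0)$ follows from the initial hypothesis $\F(\mu_0)+W_2(\mu_0,\mu_\infty)<\infty$ and $|dW_2^2/dt|\le 2W_2\|u\|_{L^2(\mu)}$.

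First I would prove monotone decay. Apply Corollary~\ref{cor:wasserstein_tt} with $\nu=\mu_\infty$ (which satisfies the stationarity equation of Proposition~\ref{prop:stationary}); since $V\not\equiv 0$, Proposition~\ref{prop:J_dissipation} yields $J_V(\mu_t|\mu_\infty)+J_\mw(\mu_t|\mu_\infty)\ge c_\ell\,W_2^2(\mu_t,\mu_\infty)$ with $c_\ell>0$. Using the integrated form inherited from Theorem~\ref{thm:wasserstein_tt} together with the energy identity \eqref{eq:energy_estimate}, one obtains for every $T>0$
\[
 \mathcal{L}(T)\le \mathcal{L}(0)+(2-\beta\gamma)\!\int_0^T\!\|u_t\|_{L^2(\mu_t)}^2\,dt - 2c_\ell\!\int_0^T\! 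W_2^2(\mu_t,\mu_\infty)\,dt\le \mathcal{L}(0)-2c_\ell\!\int_0^T\! W_2^2\,dt,
\]
where the last inequality uses $\beta>2/\gamma$.

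Next I would produce a matching nonnegative lower bound. From Proposition~\ref{prop:wasserstein_t} one has $|(d/dt)W_2^2|\le 2W_2\|u_t\|_{L^2(\mu_t)}$, and Young's inequality applied with any $\epsilon\in(2/\beta,\gamma)$ (a nonempty interval precisely when $\beta>2/\gamma$) gives
\[
 \mathcal{L}(t)\ge (\gamma-\epsilon)\,W_2^2(\mu_t,\mu_\infty)+\Bigl(\tfrac{\beta}{2}-\tfrac{1}{\epsilon}\Bigr)\|u_t\|_{L^2(\mu_t)}^2+\beta\bigl[\F(\rho_t)-\F(\rho_\infty)\bigr]\ge 0,
\]
where the excess $\F(\rho_t)-\F(\rho_\infty)\ge 0$ since $\mu_\infty$ minimizes $\F$. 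Combining the two displays yields uniform-in-$t$ bounds on $W_2(\mu_t,\mu_\infty)$, on $\|u_t\|_{L^2(\mu_t)}$, and on $\F(\rho_t)-\F(\rho_\infty)$, together with the summability $\int_0^\infty W_2^2(\mu_t,\mu_\infty)\,dt\le \mathcal{L}(0)/(2c_\ell)<\infty$.

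I would conclude by a Barbalat-type argument. The uniform bounds on $W_2$ and $\|u_t\|_{L^2(\mu_t)}$ combined with $|dW_2^2/dt|\le 2W_2\|u_t\|_{L^2(\mu_t)}$ show that $t\mapsto W_2^2(\mu_t,\mu_\infty)$ is globally Lipschitz, hence uniformly continuous on $[0,\infty)$; an integrable, uniformly continuous nonnegative function must tend to zero, giving the claim. The main technical hurdle is the passage from the Dini-type inequality of Corollary~\ref{cor:wasserstein_tt} to its integrated form used in step one; this has essentially been carried out in the proof of Theorem~\ref{thm:wasserstein_tt}, so everything downstream is elementary Young/Cauchy--Schwarz manipulations.
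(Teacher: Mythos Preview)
Your proof is correct and follows essentially the same route as the paper: the functional $\mathcal{L}$ coincides (up to relabelling of the parameter $\beta$) with the paper's $\G(\mu_t,\mu_\infty)=\alpha W_2^2+\frac{d}{dt}W_2^2+2\beta(\H(\rho_t,u_t)-\F(\rho_\infty))$, the choice $\alpha=\gamma$ and $\beta\gamma>1$ (respectively $\beta>2/\gamma$ in your normalization) is exactly the paper's, and the conclusion via integrability plus uniform continuity of $t\mapsto W_2^2(\mu_t,\mu_\infty)$ is identical. The only cosmetic difference is that the paper fixes $\beta=(1+c_\ell)/\gamma$ and packages the nonnegativity of $\mathcal{L}$ via the equivalence~\eqref{eq:equivalence}, whereas you leave $\beta>2/\gamma$ free and redo that equivalence by hand with Young's inequality.
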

\begin{proof}
 Consider the functional
 \begin{align*}
  \G(\mu_t,\mu_\infty) &:=2\beta\big(\F(\mu_t)-\F(\mu_\infty)\big)+\J(\mu_t,\mu_\infty) \\
  &= \alpha W_2^2(\mu_t,\mu_\infty) + \frac{d}{dt}W_2^2(\mu_t,\mu_\infty) + 2\beta\big(\H(\rho_t,u_t)-\H(\rho_\infty,0)\big) \ge 0,
 \end{align*}
 with constants $\alpha,\beta>0$ to be specified later. Note that the first equality easily provides the nonnegativity of $\G$, while the second allows for the computation of the temporal derivative using \eqref{eq:energy_estimate}. Taking the temporal derivative of $\G$ along the flow generated by the damped Euler equations \eqref{eq:euler} and applying Corollary~\ref{cor:wasserstein_tt} we obtain
\begin{align*}
  \frac{d^+}{dt}\G(\mu_t,\mu_\infty) \le -2c_{\ell}W_2^2(\mu_t,\mu_\infty) + (\alpha-\gamma)\frac{d}{dt}W_2^2(\mu_t,\mu_\infty) - 2(\beta\gamma-1)\int_{\R^d} |u_t|^2d\mu_t .
 \end{align*}
Choosing $\alpha=\gamma$ and $\beta = (1+c_{\ell})/\gamma$, we further obtain
 \begin{align}\label{est_1}
  \frac{d^+}{dt}\G(\mu_t,\mu_\infty) \le -2c_{\ell}\E(\mu_t,\mu_\infty).
 \end{align}
Since $\alpha\beta=1+c_{\ell}>1$, we have the equivalence between $\J$ and $\E$, which concludes the proof. Indeed, integrating \eqref{est_1} over time interval $[0,t]$ gives
  \[
   \G(\mu_t,\mu_\infty) + 2c_\ell \int_0^t \E(\mu_s,\mu_\infty)\,ds \le \G(\mu_0,\mu_\infty),
  \]
Observe that $p W_2^2(\mu_t,\mu_\infty) \leq \G(\mu_t,\mu_\infty)\leq \G(\mu_0,\mu_\infty)$ due to \eqref{eq:equivalence}, which implies the uniform boundedness in time of $W_2^2(\mu_t,\mu_\infty)$, see Remark \ref{rmk5}.
  Since $\G$ is non-negative, we have that
  \[
   2c_\ell\int_0^\infty W_2^2(\mu_s,\mu_\infty)\,ds \le 2c_\ell \int_0^\infty \E(\mu_s,\mu_\infty)\,ds \le \G(\mu_0,\mu_\infty).
  \]
  Owing to the uniform continuity of $t\mapsto W_2^2(\mu_t,\mu_\infty)$ we obtain the asserted convergence \cite{kelman1960conditions}.
\end{proof}

Note that if $t\mapsto \|u_t\|_{L^2(\mu_t)}$ is further assumed to be uniformly continuous in $(0,\infty)$, then one also obtains $\|u_t\|_{L^2(\mu_t)}\to 0$ as $t\to\infty$. On the other hand, one may obtain the mentioned convergence under a different assumption provided in the following result.

\begin{corollary}\label{cor:confinement}
 Let $(\rho,u)$ be an energy decaying solution to the Euler equations \eqref{eq:euler}, with $\mu_t\in\P_2^{ac}(\R^d)$ the measure whose density is $\rho_t$ for all $t\geq0$, and $u$ satisfying additionally the assumptions of Theorem~\ref{thm:wasserstein_tt}, and $U$, $V \not\equiv 0$ and $W$ satisfying conditions {\bf (H1)}-{\bf (H3)}. Furthermore, assume that the initial data satisfies
 $
 \F(\mu_0) + W_2(\mu_0,\mu_\infty) <\infty\,,
 $
then 
 \[
\lim_{t\to\infty} \Bigl(W_2^2(\mu_t,\mu_\infty) + \|u_t\|_{L^2(\mu_t)}^2 \Bigr) =  \lim_{t\to\infty} \E(\mu_t,\mu_\infty) =0\,.
 \]
\end{corollary}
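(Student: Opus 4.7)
The plan is to build on Theorem~\ref{thm:confinement}, which already provides $W_2(\mu_t,\mu_\infty)\to 0$ together with the bound $\int_0^\infty W_2^2(\mu_s,\mu_\infty)\,ds<\infty$ obtained by integrating $\tfrac{d^+}{dt}\G \le -2c_\ell\,\E$ over $[0,\infty)$. Since $\E(\mu_t,\mu_\infty) = W_2^2(\mu_t,\mu_\infty) + \|u_t\|_{L^2(\mu_t)}^2$, it suffices to show $\|u_t\|_{L^2(\mu_t)}\to 0$. To this end, I would introduce
\[
 f(t) := \H(\rho_t,u_t) - \F(\mu_\infty) = (\F(\rho_t)-\F(\mu_\infty)) + \tfrac{1}{2}\|u_t\|_{L^2(\mu_t)}^2 \ge 0,
\]
where non-negativity uses that $\mu_\infty$ minimises $\F$ (Proposition~\ref{prop:stationary}). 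The energy identity \eqref{eq:energy_estimate} makes $f$ non-increasing and yields $\int_0^\infty \|u_s\|_{L^2(\mu_s)}^2\,ds<\infty$, so $f(t)\to L$ for some $L\ge 0$, and the task is reduced to proving $L=0$, which will force both summands in $f$ to vanish separately.

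To estimate $L$, I would use assumption {\bf (H3)} to obtain
\[
 f(t) \le c_\F(\mu_t)\,W_2(\mu_t,\mu_\infty) + \tfrac{1}{2}\|u_t\|_{L^2(\mu_t)}^2,
\]
and take time averages. Applying Cauchy--Schwarz with the weights $(1+s)^{\pm 1/2}$,
\[
 \int_0^t c_\F(\mu_s)\,W_2(\mu_s,\mu_\infty)\,ds \le \left(\int_0^t \frac{c_\F^2(\mu_s)}{1+s}\,ds\right)^{1/2}\!\left(\int_0^t (1+s)\,W_2^2(\mu_s,\mu_\infty)\,ds\right)^{1/2}.
\]
Bounding $(1+s)\le (1+t)$ in the second factor and using $C:=\int_0^\infty W_2^2(\mu_s,\mu_\infty)\,ds<\infty$, then dividing by $(1+t)$ gives
\[
 \frac{1}{1+t}\int_0^t c_\F(\mu_s)\,W_2(\mu_s,\mu_\infty)\,ds \le \sqrt{C}\left(\frac{1}{1+t}\int_0^t \frac{c_\F^2(\mu_s)}{1+s}\,ds\right)^{1/2} \longrightarrow 0
\]
by \eqref{eq:regularity}. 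Combined with the trivial bound $\frac{1}{1+t}\int_0^t \|u_s\|_{L^2(\mu_s)}^2\,ds \to 0$, this yields $\frac{1}{1+t}\int_0^t f(s)\,ds\to 0$.

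Since $f$ is non-increasing and non-negative, the elementary inequality $f(t)\le \frac{1}{t}\int_0^t f(s)\,ds$ for $t>0$ implies $L\le \lim_{t\to\infty}\frac{1}{t}\int_0^t f(s)\,ds = 0$, hence $L=0$. This simultaneously gives $\F(\rho_t)\to \F(\mu_\infty)$ and $\|u_t\|_{L^2(\mu_t)}\to 0$, and together with Theorem~\ref{thm:confinement} we conclude $\E(\mu_t,\mu_\infty)\to 0$. The main obstacle is choosing the weight $(1+s)$ in Cauchy--Schwarz so as to mesh precisely with the unusual form of \eqref{eq:regularity}: with a wrong weight the second factor either diverges (making the bound useless) or survives the normalisation by $(1+t)$, in which case the Cesàro--monotone coupling would collapse.
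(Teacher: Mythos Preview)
Your argument is correct, and it follows a genuinely different route from the paper's. The paper does not revisit the energy functional $\H$ directly; instead it multiplies the full Lyapunov functional $\G$ by the weight $(1+t)$, differentiates, and uses {\bf (H3)} together with Young's inequality (with the same splitting $c_\F W_2 \le \tfrac{\varepsilon}{2}(1+s)W_2^2 + \tfrac{1}{2\varepsilon}\,c_\F^2/(1+s)$) to absorb the bad term into the dissipation $\int_0^t (1+s)\J\,ds$. This yields the pointwise bound $(1+t)\G(\mu_t,\mu_\infty)\le c_0 + c_1\int_0^t c_\F^2(\mu_s)/(1+s)\,ds$, from which $\E\to 0$ follows immediately after dividing by $(1+t)$.

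Your approach is more modular: you take the conclusions of Theorem~\ref{thm:confinement} as a black box (notably $\int_0^\infty W_2^2\,ds<\infty$), then exploit the \emph{monotonicity} of $f=\H-\F(\mu_\infty)$ to pass from a vanishing Ces\`aro mean to a vanishing limit. Cauchy--Schwarz with weights $(1+s)^{\pm 1/2}$ plays the same role as Young's inequality in the paper, and the crude bound $(1+s)\le (1+t)$ replaces the absorption step. What you gain is conceptual economy---no need to reopen the Lyapunov construction---and the argument makes transparent that convergence of $\|u_t\|_{L^2(\mu_t)}$ is really a consequence of energy monotonicity plus the integrability of $W_2^2$. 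What the paper's approach gains is that it stays entirely within the differential-inequality framework and would deliver an explicit decay rate for $\G$ whenever the right-hand side can be quantified (e.g.\ when $c_\F$ is bounded, one reads off a $1/(1+t)$ rate directly).
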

\begin{proof}
 Notice that for $t\ge 0$, we have
 \begin{align*}
 \frac{d^+}{dt}\big((1 + t)\,\G(\mu_t,\mu_\infty)\big) &= (1 + t)\frac{d^+}{dt}\G(\mu_t,\mu_\infty) + \G(\mu_t,\mu_\infty) \\
 &\le - 2(c_\ell/q)(1 + t)\, \J(\mu_t,\mu_\infty) + 2\beta\big(\F(\mu_t)-\F(\mu_\infty)\big) + \J(\mu_t,\mu_\infty) \\
 &\le - 2(c_\ell/q)(1 + t)\, \J(\mu_t,\mu_\infty) + 2\beta c_\F(\mu_t) W_2(\mu_t,\mu_\infty) + \J(\mu_t,\mu_\infty),
 \end{align*}
 where we used \eqref{est_1}, the equivalence \eqref{eq:equivalence} and {\bf (H3)}. Integrating the equation for $t\ge0$ gives
 \begin{align*}
 (1 + t)\,\G(\mu_t,\mu_\infty) + \frac{2c_\ell}{q}\int_0^t (1+s)\, \J(\mu_s,\mu_\infty)\,ds &\le c_0 + 2\beta \int_0^t c_\F(\mu_s) W_2(\mu_s,\mu_\infty)\,ds ,
 \end{align*}
 with the constant
 \[
 c_0:= \G(\mu_0,\mu_\infty) + \int_0^\infty \J(\mu_s,\mu_\infty)\,ds <\infty.
 \]
 For the second term on the right, we estimate from above using Young's inequality to obtain
 $$\begin{aligned}
 \int_0^t c_\F(\mu_t)W_2(\mu_s,\mu_\infty)\,ds &\le \frac{\e}{2}\int_0^t (1+s)W_2^2(\mu_s,\mu_\infty)\,ds + \frac{1}{2\e}\int_0^t \frac{c_\F^2(\mu_s)}{1+s}\,ds\cr
 & \le \frac{\e}{2p}\int_0^t (1+s)\J(\mu_s,\mu_\infty)\,ds + \frac{1}{2\e}\int_0^t \frac{c_\F^2(\mu_s)}{1+s}\,ds,
 \end{aligned}$$
 where we used the equivalence \eqref{eq:equivalence} again. Choosing $\e>0$ such that $\beta \e = p c_\ell /q$ yields
 \[
 (1 + t)\G(\mu_t,\mu_\infty) + \frac{c_\ell}{q}\int_0^t (1+s)\J(\mu_s,\mu_\infty)\,ds \le c_0 + c_1 \int_0^t \frac{c_\F^2(\mu_s)\,}{1+s}\,ds,
 \]
 which finally provides the required equilibration for $t\to\infty$.
\end{proof}

\subsection{The case with no confinement}
The case without confinement requires special attention since, in this case, the free energy $\F$ is translational invariant and consequently the center of mass is not a priori fixed. However, since the evolution of the center of mass \eqref{eq:oscillator} read
\begin{align*}
 \frac{d}{dt} \int_{\R^d} x\,d\mu_t = \int_{\R^d} u_t \, d\mu_t,\qquad \frac{d}{dt} \int_{\R^d} u_t \, d\mu_t = - \gamma \int_{\R^d} u_t\,d\mu_t,
\end{align*}
solving for the center of mass of $\mu_t$ clearly implies 
\begin{align*}
  \int_{\R^d} x\,d\mu_t = \int_{\R^d} x\,d\mu_0 + \frac{1}{\gamma}(1-e^{-\gamma t})\int_{\R^d} u_0\,d\mu_0 \longrightarrow \int_{\R^d} x\,d\mu_0 + \frac{1}{\gamma}\int_{\R^d} u_0\,d\mu_0\quad\text{as\; }t\to \infty.
\end{align*}
Owing to the limit above for the center of mass of $\mu_t$, we choose a minimizer $\mu_\infty$ of $\F$ satisfying
\[
 \int x\,d\mu_\infty = \int x\,d\mu_0 + \frac{1}{\gamma}\int u_0\,d\mu_0.
\]
For this choice of stationary measure $\mu_\infty\in\P_2^{ac}(\R^d)$ we have the following statement.

\begin{theorem}\label{thm:interaction}
Let $(\rho,u)$ be an energy decaying solution to the Euler equations \eqref{eq:euler}, with $\mu_t\in\P_2^{ac}(\R^d)$ the measure whose density is $\rho_t$ for all $t\geq0$, and $u$ satisfying additionally the assumptions of Theorem~\ref{thm:wasserstein_tt}, and $U$, $V\equiv 0$ and $W$ satisfying conditions {\bf (H1)}-{\bf (H2)}. Furthermore, assume that the initial data satisfies
 $
   \F(\mu_0) + W_2(\mu_0,\mu_\infty) <\infty\,,
$
then
  \[
 \lim_{t\to\infty}  W_2(\mu_t,\mu_\infty) = 0\,.
  \]
\end{theorem}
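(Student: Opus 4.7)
The plan is to mirror the argument from the proof of Theorem~\ref{thm:confinement} using the same Lyapunov functional, with the only new ingredient being a careful handling of the center-of-mass correction that appears in the lower bound of $J_W$ in Proposition~\ref{prop:J_dissipation}.

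\textbf{Step 1: estimating the center-of-mass deviation.} With $V\equiv 0$, the equations \eqref{eq:oscillator} reduce to a linear system whose explicit solution (already quoted before the theorem) yields
\[
\int_{\R^d}x\,d\mu_\infty - \int_{\R^d}x\,d\mu_t = \frac{1}{\gamma}e^{-\gamma t}\int_{\R^d}u_0\,d\mu_0,
\]
thanks to the specific choice of $\mu_\infty$ whose center of mass equals $\int x\,d\mu_0 + \gamma^{-1}\int u_0\,d\mu_0$. Proposition~\ref{prop:J_dissipation} (with $J_V\equiv 0$) then gives
\[
-J_W(\mu_t|\mu_\infty) \le -c_W W_2^2(\mu_t,\mu_\infty) + 2C_0 e^{-2\gamma t},\qquad C_0 := \frac{c_W}{2\gamma^2}\left|\int_{\R^d}u_0\,d\mu_0\right|^2.
\]

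\textbf{Step 2: Lyapunov estimate.} I would use the exact same functional as in Theorem~\ref{thm:confinement},
\[
\G(\mu_t,\mu_\infty) := 2\beta\big(\F(\mu_t)-\F(\mu_\infty)\big) + \J(\mu_t,\mu_\infty) = \alpha W_2^2(\mu_t,\mu_\infty) + \tfrac{d}{dt}W_2^2(\mu_t,\mu_\infty) + 2\beta\big(\H(\rho_t,u_t)-\H(\rho_\infty,0)\big),
\]
which is nonnegative because $\mu_\infty$ minimizes $\F$. Differentiating, applying Corollary~\ref{cor:wasserstein_tt} (with $J_V=0$) together with the bound from Step~1, and using the energy identity \eqref{eq:energy_estimate} for $\H$, I obtain
\[
\frac{d^+}{dt}\G(\mu_t,\mu_\infty) \le -2c_W W_2^2(\mu_t,\mu_\infty) + (\alpha-\gamma)\tfrac{d}{dt}W_2^2(\mu_t,\mu_\infty) - 2(\beta\gamma-1)\|u_t\|_{L^2(\mu_t)}^2 + 4C_0 e^{-2\gamma t}.
\]
Exactly as in Theorem~\ref{thm:confinement}, choosing $\alpha=\gamma$ and $\beta=(1+c_W)/\gamma$ annihilates the two middle terms and gives
\[
\frac{d^+}{dt}\G(\mu_t,\mu_\infty) \le -2c_W\,\E(\mu_t,\mu_\infty) + 4C_0 e^{-2\gamma t}.
\]
The condition $\alpha\beta=1+c_W>1$ preserves the equivalence \eqref{eq:equivalence} between $\J$ and $\E$.

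\textbf{Step 3: conclusion.} Integrating on $[0,t]$ yields
\[
\G(\mu_t,\mu_\infty) + 2c_W\int_0^t\E(\mu_s,\mu_\infty)\,ds \le \G(\mu_0,\mu_\infty) + \frac{2C_0}{\gamma},
\]
so that $W_2^2(\mu_t,\mu_\infty)$ is bounded uniformly in $t$ (hence $t\mapsto W_2^2(\mu_t,\mu_\infty)$ is uniformly continuous by Remark~\ref{rmk5}), and $W_2^2(\mu_\cdot,\mu_\infty)\in L^1(0,\infty)$. The asserted convergence $W_2(\mu_t,\mu_\infty)\to 0$ then follows from the same classical criterion for uniformly continuous nonnegative integrable functions invoked at the end of the proof of Theorem~\ref{thm:confinement}.

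The conceptual obstacle is that, without confinement, the dissipation produced by $J_W$ is not quite $c_W W_2^2$ but only $c_W W_2^2$ minus a center-of-mass correction; however, the smart choice of $\mu_\infty$ matching the asymptotic center of mass makes this correction decay like $e^{-2\gamma t}$, which enters the Gronwall-type estimate as an integrable perturbation and is therefore harmless. No other modification of the Theorem~\ref{thm:confinement} scheme is required.
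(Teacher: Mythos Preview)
Your proposal is correct and follows essentially the same route as the paper: the same Lyapunov functional $\G$ with the same choices $\alpha=\gamma$, $\beta=(1+c_W)/\gamma$, the same treatment of the center-of-mass defect as an exponentially decaying (hence integrable) perturbation, and the same conclusion via uniform continuity of $t\mapsto W_2^2(\mu_t,\mu_\infty)$. The only cosmetic difference is that you keep the sharper constant $\bigl|\int u_0\,d\mu_0\bigr|^2$, whereas the paper bounds it by $\int|u_0|^2\,d\mu_0$.
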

\begin{proof}
 As in Theorem \ref{thm:confinement}, we make use of Corollary~\ref{cor:wasserstein_tt}. In this particular case, we have $J_V\equiv 0$. As for $J_W$ we estimate as in Proposition~\ref{prop:J_dissipation} to obtain
 \[
  J_W(\mu_t|\mu_\infty) \ge c_W W_2^2(\mu_t,\mu_\infty) - c_W\lt| \int_{\R^d} x \,d\mu_\infty - \int_{\R^d} x \,d\mu_t \rt|^2.
 \]
 On the other hand, we have that
 \[
  \int_{\R^d} x \,d\mu_\infty - \int_{\R^d} x \,d\mu_t = \frac{1}{\gamma}e^{-\gamma t}\int_{\R^d} u_0\,d\mu_0,
 \]
 which subsequently gives
 \[
  J_W(\mu_t|\mu_\infty) \ge c_W W_2^2(\mu_t,\mu_\infty) - \frac{c_W}{\gamma^2}e^{-2\gamma t}\int_{\R^d} |u_0|^2 \,d\mu_0
 \]
 Taking the temporal derivative of the functional $\G$ provided in the proof of Theorem~\ref{thm:confinement} gives
\begin{align*}
   \frac{d^+}{dt}\G(\mu_t,\mu_\infty) \le -2c_W \E(\mu_t,\mu_\infty) + 2\frac{c_W}{\gamma^2}e^{-2\gamma t}\int_{\R^d} |u_0|^2 \,d\mu_0,
\end{align*}
where we chose $\alpha=\gamma$ and $\beta = (1+c_W)/\gamma$. Integrating the inequality above in time gives
\[
 \G(\mu_t,\mu_\infty) + 2c_W \int_0^t \E(\mu_s,\mu_\infty) \,ds \leq \G(\mu_0,\mu_\infty) + \frac{c_W}{\gamma^3}\int |u_0|^2 \,d\mu_0.
\]
Since $\G(\mu_t,\mu_\infty) \geq 0$ for all times $t\ge 0$, we finally obtain
\[
2c_W\int_0^\infty W_2^2(\mu_s, \mu_\infty)\,ds \le 2c_W\int_0^\infty \E(\mu_s, \mu_\infty)\,ds \leq \G(\mu_0,\mu_\infty) + \frac{c_W}{\gamma^3}\int |u_0|^2 \,d\mu_0 < \infty,
\]
and consequently the convergence due to the uniform continuity of $t\mapsto W_2^2(\mu_t,\mu_\infty)$.
\end{proof}

Proceeding as in the proof of Corollary~\ref{cor:confinement}, we obtain the following result.

\begin{corollary}\label{cor:interaction}
Let $(\rho,u)$ be an energy decaying solution to the Euler equations \eqref{eq:euler}, with $\mu_t\in\P_2^{ac}(\R^d)$ the measure whose density is $\rho_t$ for all $t\geq0$, and $u$ satisfying additionally the assumptions of Theorem~\ref{thm:wasserstein_tt}, and $U$, $V\equiv 0$ and $W$ satisfying conditions {\bf (H1)}-{\bf (H3)}. Furthermore, assume that the initial data satisfies
 $
	\F(\mu_0) + W_2(\mu_0,\mu_\infty) <\infty\,,
 $
then
	\[
\lim_{t\to\infty} \Bigl( W_2^2(\mu_t,\mu_\infty) + \|u_t\|^2_{L^2(\mu_t)} \Bigr) = \lim_{t\to\infty} \E(\mu_t,\mu_\infty) =0 \,.
	\]
\end{corollary}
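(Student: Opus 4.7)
The plan is to mirror the argument in Corollary~\ref{cor:confinement}, using the same Lyapunov functional
\[
 \G(\mu_t,\mu_\infty) := 2\beta\bigl(\F(\mu_t)-\F(\mu_\infty)\bigr) + \J(\mu_t,\mu_\infty),
\]
with $\alpha = \gamma$ and $\beta = (1 + c_W)/\gamma$, exactly as in Theorem~\ref{thm:interaction}. The essential differential inequality is already established in the proof of that theorem:
\[
 \frac{d^+}{dt}\G(\mu_t,\mu_\infty) \le -2 c_W \E(\mu_t,\mu_\infty) + \frac{2 c_W}{\gamma^2} e^{-2\gamma t}\int_{\R^d}|u_0|^2 d\mu_0.
\]
Compared with the confinement case, the only new feature is the exponentially decaying source term arising from the drift of the center of mass.

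Following Corollary~\ref{cor:confinement}, I would next compute $\frac{d^+}{dt}\bigl((1+t)\G(\mu_t,\mu_\infty)\bigr)$, use the inequality above together with the equivalence $\E \ge (1/q)\J$ from \eqref{eq:equivalence}, and apply {\bf (H3)} to bound
\[
 2\beta\bigl(\F(\mu_t)-\F(\mu_\infty)\bigr) \le 2\beta\, c_\F(\mu_t)\, W_2(\mu_t,\mu_\infty).
\]
Splitting this via Young's inequality as in the confinement proof and using $W_2^2 \le (1/p)\J$ absorbs the lossy term into the dissipation $-(2c_W/q)(1+t)\J$ for a suitably small $\e>0$. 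Integration over $[0,t]$ yields
\[
 (1+t)\,\G(\mu_t,\mu_\infty) + \frac{c_W}{q}\int_0^t (1+s)\,\J(\mu_s,\mu_\infty)\,ds \le c_0 + c_1 \int_0^t \frac{c_\F^2(\mu_s)}{1+s}\,ds,
\]
with $c_0$ finite thanks to the $L^1(0,\infty)$ bound on $\E$ provided by Theorem~\ref{thm:interaction}, together with the fact that $\int_0^\infty (1+s) e^{-2\gamma s}\,ds < \infty$ which controls the contribution of the exponential source term.

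Dividing by $(1+t)$ and invoking \eqref{eq:regularity} from {\bf (H3)}, the right-hand side is $o(1)$ as $t\to\infty$. Since $\G \ge \J \ge p\,\E \ge 0$, this forces $\E(\mu_t,\mu_\infty) \to 0$, which is precisely the asserted convergence of both $W_2^2(\mu_t,\mu_\infty)$ and $\|u_t\|_{L^2(\mu_t)}^2$. The only step requiring even minor care beyond a transcription of the confinement proof is the verification that the additional $e^{-2\gamma t}\|u_0\|_{L^2(\mu_0)}^2$ term, once multiplied by $(1+t)$ and integrated, remains bounded uniformly in $t$; this is immediate from the exponential decay and poses no real obstacle.
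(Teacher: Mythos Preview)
Your proposal is correct and follows exactly the route the paper intends: the paper's own proof consists of the single sentence ``Proceeding as in the proof of Corollary~\ref{cor:confinement}, we obtain the following result,'' and you have correctly reconstructed that argument, including the one additional point---that the extra source term $(2c_W/\gamma^2)e^{-2\gamma t}\|u_0\|_{L^2(\mu_0)}^2$ from Theorem~\ref{thm:interaction}, once multiplied by $(1+t)$ and integrated, contributes only a finite constant to $c_0$.
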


%
%

\section{Overdamped limit $(\gamma\to\infty)$}\label{sec:relax}
In this section we consider the overdamped limit of \eqref{eq:euler} for large damping $\gamma\gg 1$. In this case, we rescale the time $t=\gamma\tilde t$, density $\rho_t=\tilde{\rho}_{\tilde t}$, and velocity $u_t=\tilde u_{\tilde t}/\gamma$. Dropping the tilde, we obtain the rescaled Euler equations
\begin{align}\label{eq:rescaled_euler}
\begin{aligned}
 \partial_t\rho_t^\gamma + \nabla\cdot(\rho_t^\gamma u_t^\gamma) &= 0,\\
 \partial_t(\rho_t^\gamma u_t^\gamma) + \nabla\cdot(\rho_t^\gamma u_t^\gamma\otimes u_t^\gamma) &= - \gamma^2\Big[\nabla p(\rho_t^\gamma) + \rho_t^\gamma\lt(\nabla V + \nabla \mw \star \rho_t^\gamma \rt) + \rho_t^\gamma u_t^\gamma\Big],
\end{aligned}
\end{align}
where we introduced the superscript $\gamma$ to make explicit the dependence of the solutions $(\rho^\gamma,u^\gamma)$ on the damping parameter $\gamma$. For the limit $\gamma\to\infty$, we wish to show that solutions $(\rho^\gamma,u^\gamma)$ corresponding to \eqref{eq:rescaled_euler} converge to the solution $(\bar\rho,\bar u)$ of the first order equation
\begin{align}\label{eq:porous_media}
 \partial_t\bar\rho_t + \nabla\cdot(\bar\rho_t \bar u_t) = 0,\qquad \bar u_t = -\nabla(\delta_\mu \F)(\bar\rho_t)\,.
\end{align}
It is well-known that the first order equation \eqref{eq:porous_media} has a gradient flow structure in $\P_2(\R^d)$ for the functional $\F$ and satisfies the decay estimates
\begin{align}\label{eq:porous_decay}
 \frac{d}{dt}\F(\bar\rho_t) = -\int_{\R^d} |\bar u_t|^2 \bar\rho_t \,dx,\qquad \frac{d}{dt}\int_{\R^d} |\bar u_t|^2\bar\rho_t\,dx = -D(\bar\rho_t),
\end{align}
with $D(\bar\rho_t)\ge 0$ for all times $t\ge 0$ \cite{carrillo2003kinetic}. Furthermore, $(\bar\rho,\bar u)$ satisfies the momentum equation
\[
 \partial_t(\bar\rho_t\bar u_t) + \nabla\cdot(\bar\rho_t\bar u_t\otimes\bar u_t) = -\bar\rho_t G_{\bar\rho_t},
\]
with $G_{\bar\rho_t} = -(\partial_t \bar u_t + \bar u_t\cdot\nabla \bar u_t)$ which we assume to be in $L^2((0,T),L^2(\bar\rho_t\, dx))$. In this case, given the measures $\mu^\gamma$ and $\bar{\mu}$, with densities $\rho^\gamma$ and $\bar{\rho}$ respectively, we get
\[
 \int_{\R^d} \langle y-T_t(y),G_{\bar\mu_t}(t,y)\rangle\,d\bar\mu_t \le \frac{1}{2}W_2^2(\mu_t^\gamma,\bar\mu_t) + \frac{1}{2}\int_{\R^d}|G_{\bar\mu_t}|^2 d\bar\mu_t,
\]
due to Young's inequality, where $T_t\#\bar\mu_t=\mu_t^\gamma$. On the other hand, we have
\begin{align*}
 \int_{\R^d} \langle y-T_t(y),G_{\mu_t^\gamma}(t,T_t(y))\rangle\,d\bar\mu_t \le -\gamma^2 \left[ c_\ell W_2^2(\mu_t^\gamma,\bar\mu_t) + \frac{1}{2}\frac{d}{dt}W_2^2(\mu_t^\gamma,\bar\mu_t) \right],
\end{align*}
where $c_\ell=c_V + \min\{c_W,0\}$. Here, we explicitly used the fact that $\bar u_t = -\nabla (\delta_\mu\F)(\bar\mu_t)$. Hence,
\begin{align}\label{eq:porous_tt}
\begin{aligned}
 \frac{d^+}{dt}\frac{d}{dt}W_2^2(\mu_t^\gamma,\bar\mu_t) \le &\, 2\int_{\R^d} |u_t^\gamma(T_t(y))-\bar u_t(y)|^2 d\bar\mu_t \\
 &- (2c_\ell \gamma^2 -1) W_2^2(\mu_t^\gamma,\bar\mu_t) - \gamma^2\frac{d}{dt}W_2^2(\mu_t^\gamma,\bar\mu_t) + \int_{\R^d}|G_{\bar\mu_t}|^2 d\bar\mu_t,
\end{aligned}
\end{align}
holds due to Theorem~\ref{thm:wasserstein_tt_2}. Now define the following functionals
\begin{align*}
 \E(\mu_t,\bar\mu_t) &:= W_2^2(\mu_t,\bar\mu_t) + \int_{\R^d}|u_t|^2 d\mu_t + \int_{\R^d} |\bar u_t|^2 d\bar\mu_t, \\
 \J(\mu_t,\bar\mu_t) &:= \alpha W_2^2(\mu_t,\bar\mu_t) + \frac{d}{dt}W_2^2(\mu_t,\bar\mu_t) + \beta\left[\int_{\R^d}|u_t|^2 d\mu_t + \int_{\R^d} |\bar u_t|^2 d\bar\mu_t\right],
\end{align*}
where $\alpha,\beta>0$ are appropriately chosen constants. Note that for $\alpha\beta>1/2$,
\[
 p\,\E \le \J \le q\,\E
\]
for some constants $p,q>0$, depending only on $\alpha$ and $\beta$.

\begin{theorem}\label{thm:relaxation}
Let $(\rho^\gamma,u^\gamma)$ be energy decaying solutions of \eqref{eq:rescaled_euler} for all $\gamma\gg 1$ sufficiently large and let $(\bar\rho,\bar u)$ be a gradient flow solution to \eqref{eq:porous_media} satisfying additionally the assumptions of Theorem~\ref{thm:wasserstein_tt_2}.  We further assume the initial conditions to be well-prepared, i.e., $\rho_0^\gamma=\bar\rho_0$ and $u_0^\gamma = \bar u_0$ such that
$
  \F(\bar\mu_0) + W_2(\mu_0^\gamma,\bar\mu_0) < \infty\,,
$
then
 \[
\lim_{\gamma\to\infty}  \int_0^T W_2^2(\mu_t^\gamma,\bar\mu_t)\,dt =0\,,
 \]
where $\mu^\gamma_t$ and $\bar{\mu}_t$ are the measures with densities $\rho^\gamma_t$ and $\bar{\rho}_t$ for all $t\geq 0$ respectively.
\end{theorem}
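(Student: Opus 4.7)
Following the strategy of Theorems~\ref{thm:confinement}--\ref{thm:interaction}, I would introduce the Lyapunov-type functional
\[
\mathcal{G}^\gamma(t) := 2\beta\gamma^2\bigl[\H_\gamma(\rho^\gamma_t, u^\gamma_t) - \F(\bar\rho_t)\bigr] + \alpha W_2^2(\mu^\gamma_t, \bar\mu_t) + \frac{d}{dt}W_2^2(\mu^\gamma_t, \bar\mu_t) + \beta\int_{\R^d}|\bar u_t|^2\,d\bar\mu_t,
\]
where $\H_\gamma(\rho, u) := \F(\rho) + \frac{1}{2\gamma^2}\int|u|^2\rho\,dx$ is the natural energy of the rescaled system \eqref{eq:rescaled_euler}, which for energy-decaying solutions satisfies the rescaled identity $\frac{d}{dt}\H_\gamma(\rho^\gamma_t, u^\gamma_t) = -\int|u^\gamma_t|^2\,d\mu^\gamma_t$. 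The constants $\alpha, \beta > 0$ will be tuned below.

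Differentiating $\mathcal{G}^\gamma$ in time using this identity, the dissipation \eqref{eq:porous_decay} for $\bar\rho$, the second-order estimate \eqref{eq:porous_tt}, and the crude bound $|u^\gamma(T_t) - \bar u|^2 \le 2|u^\gamma(T_t)|^2 + 2|\bar u|^2$ yields
\[
\frac{d^+}{dt}\mathcal{G}^\gamma \le (4-2\beta\gamma^2)\!\int\!|u^\gamma|^2 d\mu^\gamma + (4+2\beta\gamma^2)\!\int\!|\bar u|^2 d\bar\mu + (\alpha-\gamma^2)\tfrac{d}{dt}W_2^2 - (2c_\ell\gamma^2-1)W_2^2 + \!\int\!|G_{\bar\mu_t}|^2 d\bar\mu_t.
\]
Choosing $\alpha = \gamma^2$ eliminates the $dW_2^2/dt$ term and $\beta = 2/\gamma^2$ annihilates the unfavourable $\int|u^\gamma|^2$ contribution. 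Integrating on $[0, T]$ produces
\[
(2c_\ell\gamma^2 - 1)\int_0^T W_2^2\,dt \le \mathcal{G}^\gamma(0) - \mathcal{G}^\gamma(T) + 8\!\int_0^T\!\int|\bar u_t|^2 d\bar\mu_t\, dt + \!\int_0^T\!\int|G_{\bar\mu_t}|^2 d\bar\mu_t\, dt,
\]
where the two integrals on the right are finite, independent of $\gamma$, under the assumed regularity of $\bar\rho$.

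By well-preparedness, $W_2(\mu^\gamma_0, \bar\mu_0) = 0$ and Proposition~\ref{prop:wasserstein_t} applied with the diagonal coupling gives $\frac{d}{dt}W_2^2|_{t=0} = 0$, so $\mathcal{G}^\gamma(0) = (4/\gamma^2)\int|\bar u_0|^2 d\bar\mu_0 = O(\gamma^{-2})$. It remains to extract a $\gamma$-independent lower bound for $\mathcal{G}^\gamma(T)$: I would use $\F(\rho^\gamma_T) \ge \F(\mu_\infty)$, the monotonicity $\F(\bar\rho_T) \le \F(\bar\rho_0)$ and $\int|\bar u_T|^2 d\bar\mu_T \le \int|\bar u_0|^2 d\bar\mu_0$ from \eqref{eq:porous_decay}, the bound $\int|u^\gamma_T|^2 d\mu^\gamma_T \le 2\gamma^2[\H_\gamma(\rho^\gamma_0, u^\gamma_0) - \F(\mu_\infty)]$ from the monotonicity of $\H_\gamma$, and absorb the cross-term via Young's inequality $|\tfrac{d}{dt}W_2^2(T)| \le \tfrac{1}{2}\gamma^2 W_2^2(T) + C\gamma^{-2}(\int|u^\gamma_T|^2 d\mu^\gamma_T + \int|\bar u_T|^2 d\bar\mu_T)$. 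Combining these ingredients provides $\mathcal{G}^\gamma(T) \ge -C$ with $C$ independent of $\gamma$. Dividing the integrated inequality through by $2c_\ell\gamma^2 - 1$ then yields $\int_0^T W_2^2\,dt = O(\gamma^{-2}) \to 0$. The main obstacle is precisely securing this $\gamma$-independent lower bound, since $\frac{d}{dt}W_2^2(T)$ has no fixed sign and must be tamed by the coercive $\gamma^2 W_2^2(T)$; this is the Wasserstein analogue of the equivalence $\J \asymp \E$ announced before the theorem and of the damped-oscillator argument sketched in Section~\ref{sec:energy}.
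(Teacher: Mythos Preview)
Your proposal is correct and follows essentially the same route as the paper: build a Lyapunov functional combining $W_2^2$, $\tfrac{d}{dt}W_2^2$, kinetic terms, and a multiple of the free energy; differentiate using \eqref{eq:porous_decay} and \eqref{eq:porous_tt}; choose $\alpha=\gamma^2$, $\beta=2/\gamma^2$; integrate; and bound the endpoint values of the functional independently of $\gamma$.

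The one noteworthy difference is the sign with which $\F(\bar\rho_t)$ enters your functional. The paper uses $2\beta\gamma^2\bigl(\F(\mu_t^\gamma)+\F(\bar\mu_t)\bigr)$, so that differentiating $\F(\bar\mu_t)$ produces the favourable $-2\beta\gamma^2\int|\bar u_t|^2\,d\bar\mu_t$, which exactly cancels the $|\bar u|^2$ contribution coming from the crude bound on $|u^\gamma-\bar u|^2$; no residual $\int_0^T\!\int|\bar u|^2$ term appears. Your choice $-\F(\bar\rho_t)$ flips this sign, leaving the extra $8\int_0^T\!\int|\bar u_t|^2\,d\bar\mu_t\,dt$ on the right, which you then control via \eqref{eq:porous_decay} by $8(\F(\bar\rho_0)-\F(\bar\rho_T))$. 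This is harmless but slightly less clean. Likewise, the paper obtains the endpoint lower bound in one line from $\J\ge 0$ (the equivalence $\J\asymp\E$ with $\alpha\beta=2$) together with $\F(\mu^\gamma_T)+\F(\bar\mu_T)\ge -c_0$, whereas your Young-inequality argument for $\tfrac{d}{dt}W_2^2(T)$ plus the bound on $\gamma^{-2}\int|u^\gamma_T|^2$ achieves the same thing with a bit more work. In short: same method, slightly different bookkeeping.
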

\begin{proof}
 The proof is based on Corollary~\ref{cor:wasserstein_tt} and follows the same line of arguments as in the previous theorem. We begin by defining the functional
 \[
  \G(\mu_t^\gamma,\bar\mu_t) := 2\beta \gamma^2 \big(\F(\mu_t^\gamma) + \F(\bar\mu_t)\big) + \J(\mu_t^\gamma,\bar\mu_t).
 \]
 The temporal derivative of $\G$ gives
 \begin{align*}
  \frac{d^+}{dt}\G(\mu_t^\gamma,\bar\mu_t) \le &\, (\alpha-\gamma^2)\frac{d}{dt}W_2^2(\mu_t^\gamma,\bar\mu_t) - 2\beta\gamma^2\left[ \int_{\R^d}|u_t^\gamma|^2 d\mu_t^\gamma + \int_{\R^d} |\bar u_t|^2 d\bar\mu_t \right] - \beta D(\bar\rho_t) \\
  & +2\int_{\R^d} |u_t^\gamma(T_t^\gamma(y))-\bar u_t(y)|^2 d\bar\mu_t - (2c_\ell \gamma^2-1)W_2^2(\mu_t^\gamma,\bar\mu_t) + \int_{\R^d}|G_{\bar\mu_t}|^2 d\bar\mu_t.
 \end{align*}
 where we used the estimates \eqref{eq:porous_decay} and \eqref{eq:porous_tt}. Choosing $\alpha=\gamma^2$ and noting that $D(\bar\rho_t)\ge 0$ and
 \[
  \int_{\R^d} |u_t^\gamma(T_t^\gamma(y))-\bar u_t(y)|^2 d\bar\mu_t \le 2\left[ \int_{\R^d} |u_t^\gamma|^2 d\mu_t^\gamma + \int_{\R^d} |\bar u_t|^2 d\bar\mu_t \right],
 \]
 we further obtain
 \begin{align*}
  \frac{d^+}{dt}\mathcal{G}(\mu_t^\gamma,\bar\mu_t) \le &\, - (2c_{\ell}\gamma^2-1) W_2^2(\mu_t^\gamma,\bar\mu_t) \\
  &- (\beta\gamma^2 - 2)\int_{\R^d} |u_t^\gamma(T_t^\gamma(y))-\bar u_t(y)|^2 d\bar\mu_t +\int_{\R^d}|G_{\bar\mu_t}|^2 d\bar\mu_t.
 \end{align*}
 We now choose $\beta= 2/\gamma^2$ and integrate in time $t$ over $[0,T]$ to obtain
 \begin{align*}
  \mathcal{G}(\mu_T^\gamma,\bar\mu_T) + (2c_{\ell}\gamma^2-1)\int_0^T W_2^2(\mu_t^\gamma,\bar\mu_t)\,dt
  \le \mathcal{G}(\mu_0^\gamma,\bar\mu_0) + \int_0^T\int_{\R^d}|G_{\bar\rho}|^2 d\bar\mu_t\,dt.
 \end{align*}
 Since $\F(\mu_t^\gamma) + \F(\bar\mu_t)\ge -c_0$ for all $t\ge 0$ for some constant $c_0\ge 0$,
 \begin{align}\label{eq:relaxation_estimate}
  \int_0^T W_2^2(\mu_t^\gamma,\bar\mu_t)\,dt  \le M(\gamma,T),
 \end{align}
 where
 \[
  M(\gamma,T) = \frac{1}{(2c_{\ell}\gamma^2-1)}\left(4c_0 + 8 \F(\bar\mu_0) + (4/\gamma^2)\int_{\R^d} |\bar u_0|^2 d\bar\mu_0 + \int_0^T\int_{\R^d}|G_{\bar\mu_t}|^2 d\bar\mu_t\,dt \right),
 \]
  whenever $\gamma^2>1/(2c_\ell)$. Passing to the limit $\gamma\to\infty$ concludes the proof.
\end{proof}

\begin{remark}
 Notice that for $\gamma^2>1/(2c_\ell)$, if $\sup_{0 \leq T < \infty}M(\gamma,T)<\infty$, the estimate \eqref{eq:relaxation_estimate} with $T\to\infty$ provides the convergence $W_2(\mu_t^\gamma,\bar{\mu}_t)\to 0$ for $t\to\infty$. Indeed, since $\int_0^\infty \|G_{\bar{\rho}}\|^2_{L^2(\bar{\mu}_t)}\,dt$ is finite, $M(\gamma)<\infty$. Hence, the claim follows again from the uniform continuity of $t\mapsto W_2^2(\mu_t^\gamma,\bar{\mu}_t)$. This means that the error between $\mu_t^\gamma$ and $\bar \mu_t$ in $W_2$ goes to zero as $t \to \infty$.
\end{remark}


\section{Rigorous examples in the 1D case}\label{sec:1d}

In spatial dimension one, we obtain an easy representation of the 2-Wasserstein distance given by the pseudo-inverse $\chi_t$ defined as follows. Let
\[
 F_t(x) = \int_{-\infty}^x d\mu_t = \mu_t((-\infty,x]) \in \Omega:=[0,1],
\]
be the cumulative distribution of the probability measure $\mu_t$. Then
\[
 \chi_t(\eta) = \inf\{x\in \R\,|\, F_t(x)>\eta\},
\]
defines the pseudo-inverse corresponding to $\mu_t$. In this case, the Wasserstein distance between two probability measures $\mu_t$ and $\nu_t$ is equivalently expressed as
\[
 W_2^2(\mu_t,\nu_t) = \int_\Omega |\chi_t(\eta)-\zeta_t(\eta)|^2d\eta,
\]
where $\chi_t$ and $\zeta_t$ are pseudo-inverses corresponding to $\mu_t$ and $\nu_t$ respectively.

The free energy corresponding to \eqref{eq:entropy} in terms of the pseudo-inverse $\chi_t$ is given by
\[
\F(\chi_t) = \frac{1}{m-1}\int_\Omega (\partial_\eta\chi_t(\eta))^{1-m} d\eta + \int_\Omega V(\chi_t(\eta))\,d\eta + \frac{1}{2}\int_{\Omega\times\Omega} W(\chi_t(\eta)-\chi_t(\bar\eta))\,d\bar\eta\, d\eta\,,
\]
and the entropy reads
\[
\H(\chi_t,v_t) = \F(\chi_t) + \frac{1}{2} \int_\Omega |v_t(\eta)|^2 d\eta.
\]

In this case, the damped isentropic Euler equations can be transformed into
\begin{subequations}\label{eq:1d_euler}
\begin{align}
 \partial_t \chi_t(\eta) &= u_t(\chi_t(\eta)) =: v_t(\eta),\\
 \partial_t v_t(\eta) &= -\partial_\eta\big( (\partial_\eta\chi_t(\eta))^{-m} \big) - (\partial_x V)(\chi_t(\eta)) - \int_\Omega (\partial_x W)(\chi_t(\eta)-\chi_t(\bar\eta))\,d\bar\eta - \gamma v_t(\eta),
\end{align}
\end{subequations}
for the Lagrangian quantities $(\chi_t,v_t)$ on $\Omega\times \R^+$ with initial condition $(\chi_0,v_0)$ on $\Omega$.

As before, a simple verification of the temporal derivative of $\H$ for smooth solutions gives
\[
 \frac{d}{dt}\H(\chi_t,v_t) = -\gamma\int_\Omega |v_t(\eta)|^2\,d\eta.
\]

\subsection{Temporal derivatives of the Wasserstein distance}
We begin by computing the second temporal derivative of the Wasserstein distance. The first temporal derivative reads
\begin{align*}
 \frac{1}{2}\frac{d}{dt}W_2^2(\mu_t,\nu_t) = \frac{1}{2}\frac{d}{dt}\int_\Omega |\chi_t(\eta)-\zeta_t(\eta)|^2 d\eta = \int_\Omega \langle \chi_t(\eta)-\zeta_t(\eta),v_t(\eta)-w_t(\eta)\rangle\,d\eta,
\end{align*}
where $v_t$ and $w_t$ are the velocities corresponding to $\chi_t$ and $\zeta_t$ respectively. This yields
\begin{align*}
 \frac{1}{2}\frac{d^2}{dt^2}W_2^2(\mu_t,\nu_t) &= \int_\Omega |v_t(\eta)-w_t(\eta)|^2d\eta + \int_\Omega \langle\chi_t(\eta)-\zeta_t(\eta),\partial_tv_t(\eta)-\partial_tw_t(\eta)\rangle\,d\eta \\
 &=: I_1 + I_2.
\end{align*}
The second term on the right-hand side gives
\begin{align*}
 I_2 = &\,\int_\Omega \langle \partial_\eta\chi_t(\eta)-\partial_\eta\zeta_t(\eta),(\partial_\eta\chi_t(\eta))^{-m}-(\partial_\eta\zeta_t(\eta))^{-m}\rangle\,d\eta - \frac{\gamma}{2} \frac{d}{dt} W_2^2(\mu_t,\nu_t)\\
 & - \int_\Omega \langle \chi_t(\eta)-\zeta_t(\eta),(\partial_x V)(\chi_t(\eta))-(\partial_x V)(\zeta_t(\eta))\rangle\,d\eta \\
 & - \int_{\Omega\times\Omega} \langle \chi_t(\eta)-\zeta_t(\eta),(\partial_x W)(\chi_t(\eta)-\chi_t(\bar\eta))-(\partial_x W)(\zeta_t(\eta)-\zeta_t(\bar\eta))\rangle\,d\bar\eta\,d\eta.
\end{align*}
Since the function $z^{-m}$, $m\ge 1$ is monotonically decreasing, we find that the first term is non-positive. Furthermore, due to the assumptions on $V$ and $W$ appeared in ({\bf H2}), we obtain
\begin{align*}
 I_2 \le -c_\ell \int_\Omega |\chi_t(\eta)-\zeta_t(\eta)|^2 d\eta,
\end{align*}
for some constant $c_\ell>0$ (cf. Proposition~\ref{prop:J_dissipation}). Hence, we finally obtain
\[
 \frac{1}{2}\frac{d^2}{dt^2}W_2^2(\mu_t,\nu_t) + \frac{\gamma}{2} \frac{d}{dt} W_2^2(\mu_t,\nu_t) \le \int_\Omega |v_t(\eta)-w_t(\eta)|^2d\eta - c_\ell W_2^2(\mu_t,\nu_t),
\]
which resembles the inequalities seen in Section~\ref{sec:equilibration}.

In the following examples, we only consider pressureless dynamics, i.e., the damped Euler system \eqref{eq:1d_euler} without internal pressure. In this case, the system for $(\chi_t,v_t)$ reads
\begin{subequations}\label{eq:1d_example}
\begin{align}
 \partial_t \chi_t(\eta) &= v_t(\eta), \\
 \partial_t v_t(\eta) &= - (\partial_x V)(\chi_t(\eta)) - \int_\Omega (\partial_x W)(\chi_t(\eta)-\chi_t(\bar\eta))\,d\bar\eta - \gamma v_t(\eta).
\end{align}
\end{subequations}

\subsection{Smooth solutions with repulsive Newtonian potential}
We consider the damped pressureless Euler for $(\chi_t,v_t)$ given by \eqref{eq:1d_example} with the explicit potentials $V(x) = |x|^2/2$ and $W(x)=-|x|$. Notice that $W$ now forms a repulsive interaction potential. In spatial dimension one, this interaction potential induces a free energy $\F$ on $\P_2(\R)$ that is known to be 1-convex along generalized geodesics \cite{carrillo2012mass}. The stationary solution is known to be $\chi_\infty=2\eta-1$ for the corresponding force $F[\chi_t]=\chi_t-\chi_\infty=:\zeta_t$. In this case, we consider the dynamics
\begin{align}\label{eq:smooth}
\partial_t \chi_t(\eta) = v_t(\eta),\qquad  \partial_t v_t(\eta) = -F[\chi_t](\eta) - \gamma v_t(\eta),
\end{align}
and only consider smooth solutions of the system, i.e., $\chi_t$ is strictly monotonically increasing up to sets of zero measure. Furthermore, since
\[
\frac{1}{2}\frac{d}{dt}\int_\Omega |\zeta_t|^2 d\eta = \int_\Omega \zeta_t v_t\,d\eta = \int_\Omega F[\chi_t]v_t\,d\eta  = \frac{d}{dt}(\F(\chi_t)-\F(\chi_\infty)),
\]
we deduce that $\F(\chi_t)-\F(\chi_\infty)$ is essentially $\|\zeta_t\|_{L^2(\Omega)}^2$ up to a constant shift, which allows us to only work with $\|\zeta_t\|_{L^2(\Omega)}^2$. The remaining derivatives may be easily computed to obtain
\begin{align*}
\frac{1}{2}\frac{d}{dt}\int_\Omega |v_t|^2 d\eta &= - \int_\Omega v_t\zeta_t\,d\eta - \gamma\int_\Omega |v_t|^2 d\eta, \\
\frac{d}{dt}\int_\Omega \zeta_t v_t\,d\eta &= \int_\Omega |v_t|^2d\eta - \gamma\int_\Omega \zeta_tv_t\,d\eta - \int_\Omega |\zeta_t|^2 d\eta.
\end{align*}
Now define the functional
\[
\G(\chi_t,\chi_\infty) := (\beta+\gamma)\int_\Omega |\zeta_t|^2 d\eta + 2\int_\Omega \zeta_t v_t\,d\eta + \beta\int_\Omega |v_t|^2 d\eta.
\]
Taking its temporal derivative gives
\[
\frac{d}{dt}\G = -2\int_\Omega |\zeta_t|^2 d\eta - 2(\beta\gamma - 1)\int_\Omega |v_t|^2 d\eta
\]
Choosing $\beta=2/\gamma$ and using the fact that $(\beta+\gamma)\beta = 2+\beta^2>1$, we have
\[
\frac{d}{dt}\G \le -(2/q)\G\quad\Longrightarrow\quad \G(\chi_t,\chi_\infty) \le e^{-(2/q)t}\G(\chi_0,\chi_\infty),
\]
for some positive constant $q=q(\gamma)$, i.e., $\G$ decays to zero at an exponential rate.
Summarizing the above discussions we have the following theorem.

\begin{theorem}Let $(\chi,v)$ be a global smooth solution of system \eqref{eq:1d_example} with potentials $V(x) = |x|^2/2$ and $W(x)=-|x|$. Suppose the initial data satisfies $\|\chi_0 - \chi_\infty\|_{L^2} +  \|v_0\|_{L^2} < \infty$ with $\chi_\infty=2\eta-1$, then
\[
\lim_{t \to \infty} \Bigl(\|\chi_t - \chi_\infty\|_{L^2} +  \|v_t\|_{L^2} \Bigr) = 0,
\]
exponentially fast. In particular, 
\[
 \lim_{t \to \infty} W_2(\mu_t,\mu_\infty) = 0,
\]
exponentially fast with $\mu_t=\chi_t\#\mathds{1}_{\Omega}d\eta$ and $\mu_\infty=\chi_\infty\#\mathds{1}_{\Omega}d\eta=(1/2)\mathds{1}_{(-1,1)}dx$.
\end{theorem}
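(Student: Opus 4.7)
The plan is to recast the pressureless Euler system with the given potentials as a linear damped oscillator pointwise in the Lagrangian variable. Setting $\zeta_t:=\chi_t-\chi_\infty$ and observing that $\chi_\infty$ is time-independent, I first verify the force identity $F[\chi_t](\eta)=\chi_t(\eta)-\chi_\infty(\eta)$ used in \eqref{eq:smooth}: for smooth strictly monotone $\chi_t$, the interaction term collapses because $\mathrm{sgn}(\chi_t(\eta)-\chi_t(\bar\eta))=\mathrm{sgn}(\eta-\bar\eta)$, whence $\int_\Omega\mathrm{sgn}(\chi_t(\eta)-\chi_t(\bar\eta))\,d\bar\eta=2\eta-1=\chi_\infty(\eta)$, and combined with the harmonic contribution $(\partial_x V)(\chi_t(\eta))=\chi_t(\eta)$ this produces the system $\partial_t\zeta_t=v_t$, $\partial_t v_t=-\zeta_t-\gamma v_t$ pointwise in $\eta\in\Omega$.

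I would then derive the three $L^2(\Omega)$ energy identities already indicated in the discussion preceding the theorem by testing the system against $\zeta_t$, $v_t$, and the product $(\zeta_t,v_t)$ and integrating over $\Omega$:
\begin{align*}
\tfrac{1}{2}\tfrac{d}{dt}\|\zeta_t\|_{L^2}^2 &= \int_\Omega \zeta_tv_t\,d\eta, \\
\tfrac{1}{2}\tfrac{d}{dt}\|v_t\|_{L^2}^2 &= -\int_\Omega \zeta_tv_t\,d\eta-\gamma\|v_t\|_{L^2}^2, \\
\tfrac{d}{dt}\textstyle\int_\Omega \zeta_tv_t\,d\eta &= \|v_t\|_{L^2}^2-\gamma\textstyle\int_\Omega\zeta_tv_t\,d\eta-\|\zeta_t\|_{L^2}^2.
\end{align*}
These identities are exact (no inequalities) because the linearization is exact in this example.

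The next step is to introduce the Lyapunov functional
\[
\G(\chi_t,\chi_\infty):=(\beta+\gamma)\|\zeta_t\|_{L^2}^2+2\int_\Omega\zeta_tv_t\,d\eta+\beta\|v_t\|_{L^2}^2,
\]
and to choose $\beta=2/\gamma$, which yields
\[
\tfrac{d}{dt}\G = -2\|\zeta_t\|_{L^2}^2-2(\beta\gamma-1)\|v_t\|_{L^2}^2 = -2\bigl(\|\zeta_t\|_{L^2}^2+\|v_t\|_{L^2}^2\bigr).
\]
For this choice, Cauchy--Schwarz on the cross term combined with $(\beta+\gamma)\beta-1=1+\beta^2>0$ sandwiches $\G$ by $p(\|\zeta_t\|_{L^2}^2+\|v_t\|_{L^2}^2)\le\G\le q(\|\zeta_t\|_{L^2}^2+\|v_t\|_{L^2}^2)$ for explicit $p,q>0$ depending only on $\gamma$. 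Consequently $\tfrac{d}{dt}\G\le -(2/q)\G$, and Gr\"onwall produces $\G(\chi_t,\chi_\infty)\le e^{-(2/q)t}\G(\chi_0,\chi_\infty)$, which together with the lower Cauchy--Schwarz bound delivers the claimed exponential decay of $\|\chi_t-\chi_\infty\|_{L^2}+\|v_t\|_{L^2}$. The Wasserstein statement is immediate from the pseudo-inverse representation $W_2(\mu_t,\mu_\infty)=\|\chi_t-\chi_\infty\|_{L^2(\Omega)}$ recalled at the start of Section~\ref{sec:1d}.

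The main obstacle is not analytical---the computation essentially reduces to a finite-dimensional damped oscillator at each $\eta\in\Omega$---but rather the implicit regularity assumption on the solution: the sign-function simplification that turns the interaction into the linear restoring force $\zeta_t$ requires $\chi_t$ to remain strictly increasing in $\eta$ for all $t\ge 0$, which is precisely the content of "global smooth solution" in the hypothesis. Beyond this, no smallness or closeness-to-equilibrium assumption is needed, since the Lyapunov inequality for $\G$ is self-improving from the finite initial data bound.
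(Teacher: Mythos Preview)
Your proposal is correct and follows essentially the same approach as the paper: the same reduction to the linear damped oscillator $\partial_t\zeta_t=v_t$, $\partial_t v_t=-\zeta_t-\gamma v_t$, the same three $L^2$ identities, the identical Lyapunov functional $\G=(\beta+\gamma)\|\zeta_t\|_{L^2}^2+2\int\zeta_tv_t+\beta\|v_t\|_{L^2}^2$ with the same choice $\beta=2/\gamma$, and the same Gr\"onwall conclusion. Your explicit verification of the force identity via $\int_\Omega\mathrm{sgn}(\chi_t(\eta)-\chi_t(\bar\eta))\,d\bar\eta=2\eta-1$ for strictly monotone $\chi_t$ is a welcome clarification that the paper leaves implicit.
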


\begin{remark}
	The system \eqref{eq:smooth} has been extensively studied in \cite{carrillo2016pressureless} via the characteristics formulation with a friction coefficient of $\gamma=1$ and is known to either have smooth solutions or solutions may blow up in finite time, depending on the total mass. 
\end{remark}

\subsection{Generalized Lagrangian solutions with attractive Newtonian potential}
The notion of sticky solutions of pressureless Euler systems have been considered since the $70$'s to describe $\delta$-shocks that may form in finite time \cite{zel1970gravitational}. Since then, numerous works have gone into the construction of such solutions, thereby extending the notion of a solution of \eqref{eq:1d_example} past the formation of $\delta$-shocks \cite{brenier2013sticky,brenier1998sticky,carnevale1990statistics,chertock2007new,weinan1996generalized,natile2009wasserstein}. Here, we adopt the notion of generalized Lagrangian solutions for globally sticky dynamics found in \cite{brenier2013sticky}.

Consider the potentials $V(x)=|x|^2/2$ and $W(x)=|x|$, which provides global attraction for the Euler system \eqref{eq:1d_example} with the free energy
\[
 \F(\chi_t) = \frac{1}{2}\int_\Omega |\chi_t|^2d\eta + \frac{1}{2}\int_{\Omega\times\Omega} |\chi_t(\eta)-\chi_t(\bar\eta)|\,d\bar\eta\, d\eta. 
\]
Due to sufficiently strong attraction, one expects the formation of $\delta$-shocks in finite time and the only stable stationary solution of \eqref{eq:1d_example} is the Dirac measure $\mu_\infty=\delta_0$ at $x=0$. This corresponds to the stable stationary pseudoinverse $\chi_\infty\equiv 0$. 

To describe the sticky dynamics, we denote by $\K$ the closed convex cone of right-continuous nondecreasing functions in $L^2(\Omega)$, i.e.,
\[
 \K = \left\{\chi\in L^2(\Omega) \;\big|\; \chi\;\;\text{is nondecreasing}\right\},
\]
and $I_\K\colon L^2(\Omega)\to [0,+\infty]$ be the indicator function of $\K$ which is convex and lower semicontinuous. Hence, its subdifferential $\partial I_\K(\chi)$ at $\chi\in\K$ is a maximal monotone operator on $L^2(\Omega)$ and it can be characterized as
\[
 \partial I_\K(\chi) = \left\{ \zeta\in L^2(\Omega)\;\Big|\; \int_\Omega \zeta(\bar\chi-\chi)\,d\eta \le 0 \;\;\text{for all\; $\bar\chi\in \K$}\right\}.
\]
Now define the set
\[
 \Omega_\chi := \left\{ \eta\in \Omega\;|\; \chi\;\; \text{is constant in an open neighborhood of $\eta$} \right\},
\]
and the closed subspace
\[
 H_\chi = \left\{ \zeta\in L^2(\Omega)\;|\; \zeta\;\;\text{is constant on each interval $(a,b)\subset\Omega_\chi$} \right\},
\]
for any $\chi\in \K$. The projection $\mathsf{P}_{\chi}\colon L^2(\Omega)\to H_\chi$ is given by $\mathsf{P}_{\chi}(\zeta)=\zeta$ a.e.~in $\Omega\setminus\Omega_\chi$ and
\[
 \mathsf{P}_{\chi}(\zeta) = \frac{1}{b-a}\int_a^b \zeta(\eta)\,d\eta\quad\text{in any maximal interval $(a,b)\subset\Omega_\chi$}.
\]
It was shown in \cite{brenier2013sticky} that the tangent cone $\mathcal{T}_\chi\K$ to $\K$ at $\chi\in\K$ can be characterized as
\[
 \mathcal{T}_\chi\K = \left\{ \zeta\in L^2(\Omega)\;|\; \zeta\;\;\text{is nondecreasing in each interval $(a,b)\subset\Omega_\chi$} \right\}.
\]
In particular, $\mathsf{P}_\chi(\zeta)\in \mathcal{T}_\chi\K$ for any $\zeta\in L^2(\Omega)$.

Therefore, a global sticky dynamics for \eqref{eq:1d_example} can be formulated as 
\begin{align}\label{eq:sticky}
 \partial_t \chi_t(\eta) = \textsf{P}_{t}(v_t)(\eta),\qquad  \partial_t v_t(\eta) = -\textsf{P}_t(F[\chi_t])(\eta) - \gamma v_t(\eta),
\end{align}
where $F[\chi_t](\eta) = \chi_t(\eta) + 2\eta -1$ are the external and interaction forces acting on the system, and $\mathsf{P}_t := \mathsf{P}_{\chi_t}$ is the projection onto the closed subspace $H_{\chi_t}$ defined above. For this particular choice of potentials, there is a unique sticky Lagrangian solution $(\chi_t,v_t)$ of \eqref{eq:sticky} with
\[
 \chi \in \text{Lip}_{\text{loc}}(\R^+,\K),\qquad v\in \C^1(\R^+,L^2(\Omega)),
\]
and initial data $(\chi_0,v_0)\in \K\times L^2(\Omega)$.

We now compute the evolution of the energy corresponding to the solution pair $(\chi_t,v_t)$:
\[
 \frac{d^+}{dt}\left[\F(\chi_t) + \frac{1}{2}\int_\Omega |v_t|^2\,d\eta\right] = \int_\Omega F[\chi_t]\textsf{P}_t(v_t)\,d\eta - \int_\Omega v_t \mathsf{P}_t(F[\chi_t])\,d\eta - \gamma\int_\Omega |v_t|^2\,d\eta.
\]
Notice that the following equalities hold
\[
 \int_\Omega \mathsf{P}_t(v_t)(\mathsf{P}_t(F[\chi_t])-F[\chi_t]))\,d\eta = 0 = \int_\Omega \mathsf{P}_t(F[\chi_t])(\mathsf{P}_t(v_t)-v_t)\,d\eta.
\]
This consequently yields
\[
 \int_\Omega F[\chi_t]\textsf{P}_t(v_t)- v_t \mathsf{P}_t(F[\chi_t])\,d\eta = \int_\Omega\mathsf{P}_t(v_t)\mathsf{P}_t(F[\chi_t]) - \mathsf{P}_t(F[\chi_t])\mathsf{P}_t(v_t)\,d\eta = 0,
\]
and the above equality can be simplified to 
\[
 \frac{d^+}{dt}\left[\F(\chi_t) + \frac{1}{2}\int_\Omega |v_t|^2\,d\eta\right] = - \gamma\int_\Omega |v_t|^2\,d\eta,
\]
which gives the same expression as the usual case and also implies the uniform temporal bounds on $\F(\chi_t)$ and $\|v_t\|_{L^2(\Omega)}$. In particular, we have the uniform estimate
\[
 \sup\nolimits_{t\ge 0} \Big(\F(\chi_t) + \frac12\|v_t\|_{L^2(\Omega)}^2\Big) \le \F(\chi_0) + \frac12\|v_0\|_{L^2(\Omega)}^2=: \frac{c_0}{2}.
\]
Consequently, we conclude that
\[
 \F(\chi_t) \le \frac{1}{2}\int_\Omega |\chi_t|^2d\eta + \int_\Omega |\chi_t|\,d\eta \le c_\F\|\chi_t\|_{L^2(\Omega)}
\]
with $c_\F=1+(\sqrt{c_0}/2)$ and this satisfies the required assumption in {\bf (H3)}.

Now taking the temporal derivative of the $L^2$-distance between $\chi_t$ and $\chi_\infty=0$ gives
\[
 \frac{1}{2}\frac{d^+}{dt}\int_\Omega |\chi_t|^2 d\eta = \int_\Omega \chi_t\mathsf{P}_t(v_t)\,d\eta = \int_\Omega \chi_t(\mathsf{P}_t(v_t)-v_t)\,d\eta + \int_\Omega \chi_t v_t\,d\eta = \int_\Omega \chi_t v_t\,d\eta,
\]
where we used the fact that $\chi_t\in H_{\chi_t}$ and $\mathsf{P}_t(v_t)-v_t\in H_{\chi_t}^\bot$. Since $\chi_t$ is only locally Lipschitz continuous in time, we are unable to consider the second temporal derivative. Instead we compute 
\begin{align*}
 \frac{d^+}{dt}\int_\Omega \chi_t v_t\,d\eta &= \int_\Omega \mathsf{P}_t(v_t)v_t\,d\eta - \gamma\int_\Omega \chi_t v_t\,d\eta - \int_\Omega \chi_t\mathsf{P}_t(F[\chi_t])\,d\eta \\
 &\le \int_\Omega |v_t|^2d\eta - \gamma\int_\Omega \chi_t v_t\,d\eta - \int_\Omega \chi_t F[\chi_t]\,d\eta,
\end{align*}
where we used the nonexpansive property of the projection $\mathsf{P}_t$ for all times $t\ge 0$ in the first term. Notice that the last term can be rewritten as
\[
 \int_\Omega \chi_t F[\chi_t]\,d\eta = \int_\Omega |\chi_t|^2\,d\eta + \int_\Omega \chi_t(2\eta-1)\,d\eta = \int_\Omega \chi_t^2\,d\eta - \int_\Omega \partial_\eta\chi_t(\eta^2-\eta)\,d\eta.
\]
Since $\chi_t\in \K$ and $\eta(\eta-1)\le 0$ for all $\eta\in\Omega$, we have that
\[
  \int_\Omega \partial_\eta\chi_t(\eta^2-\eta)\,d\eta \le 0,
\]
and therefore
\[
 \frac{d^+}{dt}\int_\Omega \chi_t v_t\,d\eta \le  \int_\Omega |v_t|^2d\eta - \gamma\int_\Omega \chi_t v_t\,d\eta - \int_\Omega |\chi_t|^2\,d\eta.
\]
From here, we may proceed as in the previous section to conclude convergence to equilibrium for the global sticky dynamics towards the unique stationary solution $\delta_0\in\P_2(\R)$. We can summarize the discussion above in the following result.

\begin{theorem} Let $(\chi,v)$ be a global Lagrangian solution of system \eqref{eq:1d_example} with potentials $V(x) = |x|^2/2$ and $W(x)=|x|$. Suppose the initial data satisfies $\|\chi_0\|_{L^2} +  \|v_0\|_{L^2} < \infty$, then
\[
\lim_{t \to \infty} \Bigl(\|\chi_t\|_{L^2} +  \|v_t\|_{L^2} \Bigr) = 0\,.
\]
In particular, 
\[
\lim_{t \to \infty} W_2(\mu_t,\mu_\infty) = 0,
\]
with $\mu_t=\chi_t\#\mathds{1}_{\Omega}d\eta$ and $\mu_\infty=\delta_0$.
\end{theorem}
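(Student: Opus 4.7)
The plan is to complete the argument sketched immediately before the theorem by assembling a Lyapunov functional on $L^2(\Omega)$ that mirrors the construction used in the proof of Theorem~2, but formulated entirely in the Lagrangian variables $(\chi_t,v_t)$. All of the delicate analytical work has already been done in the discussion preceding the theorem: we have at our disposal the energy balance $\frac{d^+}{dt}\bigl[\F(\chi_t)+\tfrac12\|v_t\|_{L^2}^2\bigr] = -\gamma\|v_t\|_{L^2}^2$, the first-order identity $\tfrac12\frac{d^+}{dt}\|\chi_t\|_{L^2}^2 = \int_\Omega \chi_t v_t\,d\eta$, and the upper bound
\[
 \frac{d^+}{dt}\int_\Omega \chi_t v_t\,d\eta \le \|v_t\|_{L^2}^2 - \gamma\int_\Omega\chi_t v_t\,d\eta - \|\chi_t\|_{L^2}^2,
\]
which are exactly the three building blocks that I will combine linearly.

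Concretely, I would introduce
\[
 \G(t) := 2\beta\F(\chi_t) + \alpha\|\chi_t\|_{L^2}^2 + 2\int_\Omega \chi_t v_t\,d\eta + \beta\|v_t\|_{L^2}^2,
\]
with $\alpha,\beta>0$ and $\alpha\beta>1$. Since $\F\ge 0$ and the quadratic form in $(\chi_t,v_t)$ is positive definite under that constraint, $\G(t)\ge p\,\E(t)\ge 0$ with $\E(t):=\|\chi_t\|_{L^2}^2+\|v_t\|_{L^2}^2$ and $p>0$ depending only on $\alpha,\beta$. Plugging the three identities into $\frac{d^+}{dt}\G$ gives
\[
 \frac{d^+}{dt}\G(t) \le -2\|\chi_t\|_{L^2}^2 + 2(\alpha-\gamma)\int_\Omega\chi_t v_t\,d\eta + 2(1-\beta\gamma)\|v_t\|_{L^2}^2.
\]
The choice $\alpha=\gamma$ and $\beta=2/\gamma$ (so that $\alpha\beta=2>1$) cancels the cross term and produces $\frac{d^+}{dt}\G(t)\le -2\E(t)$. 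Integrating and using $\G\ge 0$ produces $\int_0^\infty \E(s)\,ds \le \tfrac{1}{2}\G(0)<\infty$, with $\G(0)$ finite by the hypothesis $\|\chi_0\|_{L^2}+\|v_0\|_{L^2}<\infty$ together with the elementary estimate $\F(\chi_0)\le \tfrac12\|\chi_0\|_{L^2}^2+\|\chi_0\|_{L^2}$.

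To upgrade this integrability to a pointwise limit, I would verify the uniform continuity of $\E$ exactly as in the proofs of Theorems~2 and~3. The uniform energy bound $\sup_t\bigl(\F(\chi_t)+\tfrac12\|v_t\|_{L^2}^2\bigr)\le c_0/2$ together with $\F(\chi_t)\ge \tfrac12\|\chi_t\|_{L^2}^2$ delivers a uniform bound on $\|\chi_t\|_{L^2}$; Cauchy--Schwarz then controls $\frac{d^+}{dt}\|\chi_t\|_{L^2}^2$, while the momentum equation $\partial_t v_t = -\mathsf P_t(F[\chi_t])-\gamma v_t$ and the $\C^1$ regularity of $v$ in $L^2(\Omega)$ bound $\frac{d}{dt}\|v_t\|_{L^2}^2$. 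Thus $\E$ is Lipschitz in time, and the Barbalat-type criterion \cite{kelman1960conditions} invoked at the end of Theorem~2 forces $\E(t)\to 0$. The $W_2$ statement is then immediate from $W_2^2(\mu_t,\delta_0)=\int_\Omega |\chi_t|^2\,d\eta=\|\chi_t\|_{L^2}^2\le \E(t)$.

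The main conceptual hurdle, namely the discontinuity of the dynamics induced by sticky collisions via the projection $\mathsf P_t$, has already been resolved in the discussion preceding the theorem: the self-adjointness of $\mathsf P_t$ (with range $H_{\chi_t}$) preserves the clean energy identity, and the $L^2$-nonexpansiveness of $\mathsf P_t$, combined with the monotonicity-driven sign $\int_\Omega\partial_\eta\chi_t(\eta^2-\eta)\,d\eta\le 0$ coming from $\chi_t\in\K$, produces the one-sided upper bound on $\frac{d^+}{dt}\int_\Omega\chi_t v_t\,d\eta$. Once these two ingredients are at hand, the Lyapunov argument itself is a finite linear-combination exercise, and the only residual delicacy lies in the bookkeeping of upper right derivatives.
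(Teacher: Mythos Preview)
Your proof is correct and follows essentially the same Lyapunov-functional construction the paper uses: the same three differential inequalities are combined with the same choice $\alpha=\gamma$, $\beta=2/\gamma$, yielding $\frac{d^+}{dt}\G\le -2\E$, then integrability of $\E$ plus uniform continuity and the Barbalat-type criterion. The only minor difference is that the paper explicitly verifies {\bf (H3)} (the bound $\F(\chi_t)\le c_\F\|\chi_t\|_{L^2}$), presumably to obtain $\|v_t\|_{L^2}\to 0$ via the Corollary~\ref{cor:confinement} mechanism, whereas you extract this directly from the $\C^1$-in-time regularity of $v$ in $L^2(\Omega)$; both routes are valid.
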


\section*{acknowledgements}
JAC was partially supported by the EPSRC grant number EP/P031587/1. YPC was supported by National Research Foundation of Korea(NRF) grant funded by the Korea government(MSIP) (No. 2017R1C1B2012918 and 2017R1A4A1014735) and POSCO Science Fellowship of POSCO TJ Park Foundation. The authors are very grateful to the Mittag-Leffler Institute for providing a fruitful working environment during the special semester \emph{Interactions between Partial Differential Equations \& Functional Inequalities}.

\bibliographystyle{abbrv}
\bibliography{wasserstein}

\end{document}